\newtheorem{theorem}[equation]{Theorem}
\newtheorem{lemma}[equation]{Lemma}
\newtheorem*{corollary}{Corollary} 
\newtheorem{conjecture}[equation]{Conjecture}
\numberwithin{equation}{section}
\theoremstyle{definition}
\newtheorem*{definition}{Definition} 
\newtheorem*{example}{Example}
\newtheoremstyle{examplen}
  {\topsep}
  {\topsep}
  {}
  {}
  {}
  {}
  {.5em}
  {\hspace{-6.5pt}}
\theoremstyle{examplen}
\newtheorem*{examplen}{Example}
\theoremstyle{remark}
\newtheorem*{remark}{Remark}
\newcommand{\FT}{\mathrm{FT}}
\DeclareMathOperator\forest{\mathcal{F}}
\newcommand{\ttrip}{\mathrm{\mathcal{T}}}
\DeclareMathOperator\type{type}
\newcommand{\sign}{\mathrm{sign}}
\DeclareMathOperator\E{E}
\DeclareMathOperator\tbreak{break}
\DeclareMathOperator\shiftbreak{shiftbreak}
\DeclareMathOperator\tjoin{join}
\DeclareMathOperator\shiftjoin{shiftjoin}
\DeclareMathOperator\secondjoin{secondjoin}
\DeclareMathOperator\secondbreak{secondbreak}
\DeclareMathOperator\len{len}
\DeclareMathOperator\sort{sort}
\DeclareMathOperator\last{last}
\DeclareMathOperator\first{first}
\DeclareMathOperator\comp{comp}
\DeclareMathOperator\rotatejoin{rotatejoin}
\DeclareMathOperator\rotatebreak{rotatebreak}
\newcommand{\scolon}{;\;}
\tikzset{every picture/.append style={font=\footnotesize}}
\definecolor{red}{rgb}{0.75,0.0,0.0}
\definecolor{blue}{rgb}{0.0,0.0,1.0}
\definecolor{black}{rgb}{0.0,0.0,0.0}
\definecolor{lightblue}{rgb}{0.0,0.5,1.0}
\definecolor{green}{rgb}{0.0,1.0,0.0}
\definecolor{darkgreen}{rgb}{0.0,0.5,0.0}
\definecolor{purple}{rgb}{0.5,0.0,1.0}
\NewDocumentCommand{\vertex}{O{black} O{above} m O{(0,0)} m O{}}{\path let \p1 = #5, \p2=#4 in node[style={draw,fill,color=#1,circle,scale=0.75},label={[text=#1]#2:$#3$}] (v#6#3) at (\x1+\x2,\y1+\y2){};}
\tikzset{fontscale/.style = {font=\relsize{#1}}
    }
\newcommand{\oset}[3][0ex]{%
  \mathrel{\mathop{#3}\limits^{
    \vbox to#1{\kern-2\ex@
    \hbox{$\scriptstyle#2$}\vss}}}}
\title{Adjacent cycle-chains are $e$-positive}
\author[1]{Foster Tom\thanks{ftom@mit.edu}}
\author[2]{Aarush Vailaya\thanks{25aarushv@students.harker.org}}
\affil[1]{Department of Mathematics, MIT, Cambridge MA}
\affil[2]{Harker School, Santa Clara CA}
\let\svlabel\label
\let\svref\ref
\renewcommand\label[2][\relax]{%
  \svlabel{#2}%
  \ifx\relax#1\else
      \expandafter\gdef\csname custom#2\endcsname{#1}%
  \fi
}
\renewcommand\ref[2][\relax]{%
  \ifx s#1\csname custom#2\endcsname\else\svref{#2}\fi
}
\newcommand\labelAndRemember[2]
\gdef\csname labeled:#1\endcsname{#2}%
\newcommand\recallLabel[1]
\endcsname\tag{\ref{#1}}}
\begin{document}

\date{}

\maketitle

\begin{abstract}
We describe a way to decompose the chromatic symmetric function as a positive sum of smaller pieces. We show that these pieces are $e$-positive for cycles. Then we prove that attaching a cycle to a graph preserves the $e$-positivity of these pieces. From this, we prove an $e$-positive formula for graphs of cycles connected at adjacent vertices. We extend these results to graphs formed by connecting a sequence of cycles and cliques.
\end{abstract}

\section{Introduction}

For a graph $G$, the chromatic symmetric function $X_G(\boldsymbol{x})$ was first defined by Stanley in \cite{stanley1995symmetric}, as a function on $\boldsymbol{x}=x_1,x_2,\ldots$, an infinite sequence of variables. The chromatic symmetric function can be written in many bases, and one particular field of interest is the positivity of the coefficients in these bases. One specific basis of interest is the elementary basis, or the $e$-basis. The famous Stanley--Stembridge conjecture in~\cite[Conjecture 5.5]{STANLEY1993261} claims that incomparability graphs of $\mathbf{(3+1)}$-free partially ordered sets are $e$-positive, meaning the coefficients of the chromatic symmetric functions of these graphs in the $e$-basis are all positive. This conjecture is closely related to the immanants of Jacobi-Trudi matrices \cite{STANLEY1993261}, the cohomology of Hessenberg varieties \cite{abreu2023splitting}, and the characters of Kazhdan--Lusztig elements of the Hecke algebra \cite{abreu2022parabolic}. Gasharov in \cite{GASHAROV1996193} proved that such graphs are Schur-positive, a weaker condition than $e$-positivity. Guay-Paquet in \cite{guaypaquet2013modular} reduced the Stanley--Stembridge conjecture to proving that all natural unit interval graphs are $e$-positive. The converse is not true, as there are many non-unit interval graphs (such as cycles) that are $e$-positive. It is generally unknown when a graph is $e$-positive or not, but many papers have proven the $e$-positivity of specific families of graphs and derived explicit formulas for certain families of graphs \cite{Alexandersson_2018,aliniaeifard2021chromatic,MR4417181,Cho2017OnEA,dahlberg2019triangularladderspd2epositive,tom2024signed,wang2024cyclechordsepositive}.

Many results related to $e$-positivity have been achieved working with certain generalizations or alternate versions of the chromatic symmetric function, such as the chromatic symmetric function in non-commuting variables \cite{Gebhard2001} or the quasisymmetric refinement \cite{shareshian2016chromaticquasisymmetricfunctions}.

The first author in \cite{tom2024signed} found a new method of finding the chromatic symmetric function for certain unit interval graphs, using objects called forest triples. With these he proved that $K\!$-chains, which are cliques connected at single vertices, are $e$-positive. We will use a similar method to prove that adjacent cycle chains are also $e$-positive. More precisely, our paper is structured as follows.

Section~\ref{sec:basic} introduces the definitions to describe the chromatic symmetric function, the $e$-basis, unit interval graphs, and the Stanley--Stembridge conjecture. In Section~\ref{sec:ftrip}, we present a way to calculate the $e$-expansion of the chromatic symmetric function of a graph using objects called forest triples. In Section~\ref{sec:sri} we find a way to decompose the chromatic symmetric function into multiple pieces, which we conjecture are all $e$-positive. Section~\ref{sec:cgi} presents a new proof of the already-known $e$-positivity of cycles, using forest triples and involutions. In Section~\ref{sec:cycchain}, we prove that given an involution on ``cycle+tree" graphs, attaching a cycle to a certain graphs at a single vertex preserves $e$-positivity. From this, we get that adjacent cycle chains are $e$-positive. Additionally, we derive explicit formulas for certain graphs, such as the graph of two cycles connected at a single vertex, $C_a+C_b$, with
\begin{multline}\labelAndRemember{eqno:C_a+C_b}
    {X_{C_a+C_b}(\boldsymbol{x})=\sum_{\substack{\alpha\models a,\beta\models b+\alpha_1\\\len(\beta)\geq 2,\beta_1\leq \alpha_1,\beta_2\geq \alpha_1}}(\alpha_1-1)\cdots (\alpha_l-1)\cdot(\beta_2-\beta_1+1)(\beta_1+\beta_2-\alpha_1-1)\cdot\\[-20pt](\beta_3-1)\cdots (\beta_l-1)\cdot e_{\sort(\alpha\setminus \alpha_1\cdot \beta\setminus\beta_1 \cdot (\beta_1-1))}}.
\end{multline}
Finally, Section~\ref{sec:cpti} proves the involution on ``cycle+tree" graphs used in Section~\ref{sec:cycchain}, through  similar methods as Section~\ref{sec:cgi}.
\section{Background}\label{sec:basic}
In this paper, $G$ always references a loopless non-directed graph with $n$ vertices labeled $1$ through $n$, and with a fixed total ordering on the edges. The vertex set of a graph is denoted as $V(G)$ and edge set as $E(G)$. The number of vertices in a graph is denoted as $|G|$. The clique $K_n$ is the graph with $n$ vertices and an edge between every pair of vertices.
\begin{definition}
A \emph{proper coloring} of graph $G$ is a function $\kappa:V(G)\rightarrow \mathbb{N}$ such that if $(i,j)\in E(G)$, then $\kappa(i)\neq \kappa(j)$.
\end{definition}
\begin{definition}
    Let $\boldsymbol{x}=(x_i\;\text{for all} \;i\in\mathbb{N})$ be an infinite tuple of variables. Then, the \emph{chromatic symmetric function} is defined as \begin{equation*}X_G(\boldsymbol{x})=\sum_{\text{$\kappa$ is proper}}\left(\prod_{i=1}^nx_{\kappa(i)}\right).\end{equation*}
\end{definition}
\begin{remark}
    The function is symmetric since $X_G(\boldsymbol{x})=X_G(\sigma(\boldsymbol{x}))$ for any permutation $\sigma$. The function is also homogeneous, since the degree of every term is $n$. Note that $X_G(\boldsymbol{x})$ is independent of the labeling of $G$ and the ordering of the edges of $G$.
\end{remark}
We use different bases to write symmetric functions without needing to use an infinite number of variables. The basis of interest in this paper is the $e$-basis. First, we must define a partition.
\begin{definition}
    Let $\lambda=(\lambda_1,\ldots,\lambda_l)$ be a tuple of positive integers. Then, $\lambda$ is a \emph{partition} of $n$ if $\lambda$ is a weakly decreasing sequence such that $\sum_{i=1}^l\lambda_i=n$. We let $\len(\lambda)=l$ be the length of partition $\lambda$, which is the number of positive integers in the tuple $\lambda$.
\end{definition}
Now, we define an elementary symmetric function.
\begin{definition}
For some partition $\lambda$ of $n$, the \emph{elementary symmetric function} $e_\lambda$ of degree $n$ is
\begin{equation*}
e_\lambda=\prod_{i=1}^{\len(\lambda)}e_{(\lambda_i)},\text{ where } e_{(k)}=\sum_{
    \substack{
        i_1,\ldots,i_k\in\mathbb{N}\\
        i_1<i_2<\cdots<i_k}
}x_{i_1}\cdots x_{i_k}.
\end{equation*}
\end{definition}
Because every chromatic symmetric function is both symmetric and homogeneous, they can be written uniquely as the sum of finitely many elementary symmetric functions \cite[Theorem 7.4.4]{Stanley_Fomin_1999}.
\begin{example}
    If $G$ is the path graph with two vertices, its chromatic symmetric function can be written as
    \begin{equation}\label{eqn:csfP2}
    X_G(\boldsymbol{x})=\sum_{\substack{
    i,j\in\mathbb{N}\\
    i<j}}2x_ix_j=2e_{(2)}.
    \end{equation}
\end{example}
\begin{definition}
    A graph $G$ is \emph{$e$-positive} if the expansion $X_G(\boldsymbol{x})=\sum_{\lambda\in\Lambda_n}c_\lambda e_{\lambda}$ has only non-negative coefficients.
\end{definition}

Now, we define unit interval graphs.
\begin{definition}
    Graph $G$ with labeled vertices $1$ through $n$ is a \emph{natural unit interval graph} if for all $i<j<k$ where $(i,k)\in E(G)$, both $(i,j)\in E(G)$ and $(j,k)\in E(G)$.
\end{definition}
We can finally state the Stanley--Stembridge conjecture.
    \begin{conjecture}[{{\cite[Conjecture~5.5]{STANLEY1993261}}}]\label{cnj:ssc}
    If $G$ is a natural unit interval graph, then it is $e$-positive.
    \end{conjecture}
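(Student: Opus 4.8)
The statement is the Stanley--Stembridge conjecture, which remains open in full generality; what follows is therefore a strategy in the spirit of the tools this paper develops, rather than a complete proof.

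The plan is to reduce the problem to a setting where the forest-triple machinery of Section~\ref{sec:ftrip} applies. By Guay-Paquet's theorem \cite{guaypaquet2013modular}, it suffices to establish $e$-positivity for all natural unit interval graphs (equivalently, to verify the ``modular law'' relation together with $e$-positivity of the graphs appearing in it), so the class named in the statement is already the right one to attack directly. Given a natural unit interval graph $G$, I would first write $X_G$ in the forest-triple expansion of Section~\ref{sec:ftrip}, so that each $e_\lambda$-coefficient appears as a signed sum indexed by the forest triples of $G$ with underlying partition $\lambda$.

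Next, I would seek a sign-reversing involution on the set of forest triples of $G$: pair each forest triple carrying sign $-1$ with one carrying sign $+1$ and the same underlying partition, so that after cancellation only nonnegative terms survive. This is exactly the method used in Sections~\ref{sec:cgi} and~\ref{sec:cycchain} for cycles and for attaching a cycle at a vertex, and in \cite{tom2024signed} for $K$-chains. The key structural input would be a decomposition of an arbitrary natural unit interval graph as a ``chain'' built from simple blocks (cliques, cycles, and possibly more general pieces), together with a compatibility lemma of the form: if $H$ admits such an involution and $G$ is obtained from $H$ by attaching a standard block along an appropriate adjacent set of vertices, then $G$ admits one too. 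One would then induct on the number of blocks, using the base cases already proved here.

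The main obstacle is precisely this generality: natural unit interval graphs are far more varied than adjacent cycle-and-clique chains, and there is at present no known uniform combinatorial rule for the $e$-expansion, nor a block decomposition with a clean attachment operation covering \emph{every} natural unit interval graph. Even the ``attach a cycle at adjacent vertices'' step in this paper requires a delicate, case-heavy involution (deferred to Section~\ref{sec:cpti}); handling the general local moves that build up an arbitrary unit interval graph --- long-range edges, overlapping cliques of varying sizes --- appears to need genuinely new ideas, which is why the conjecture has resisted proof for three decades. A realistic target along these lines, and the one this paper actually achieves, is to enlarge step by step the family of graphs for which an explicit $e$-positive forest-triple involution can be written down.
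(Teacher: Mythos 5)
You have correctly identified that this statement is the Stanley--Stembridge conjecture (in Guay-Paquet's reduced form), and the paper does not prove it either: it is stated here as a conjecture with a citation, and the body of the paper only establishes special cases (cycles, adjacent cycle chains, adjacent cycle+clique chains) via first-preserving involutions on forest triples. So there is no proof in the paper to compare yours against, and your proposal, by your own admission, is a strategy sketch rather than a proof.

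The genuine gap is exactly the one you name: the entire content of the conjecture is the missing step of constructing a sign-reversing (ideally first-preserving) involution on $\FT(G)$ for an \emph{arbitrary} natural unit interval graph $G$, or equivalently exhibiting any other mechanism forcing nonnegativity of every $e_\lambda$-coefficient. No block decomposition of general natural unit interval graphs into pieces with a clean attachment operation is known --- such graphs need not decompose along cut vertices at all, whereas every gluing result in this paper (and in \cite{tom2024signed}) attaches blocks at a single vertex --- so the proposed induction has no base-plus-step structure covering the whole class. The paper itself only records, citing \cite[Section~5]{tom2024signed}, the \emph{conjecture} that all natural unit interval graphs with a suitable edge ordering admit a first-preserving involution; your plan essentially restates that conjecture as its key lemma, so it cannot be counted as progress toward a proof, though it is an accurate description of the research program this line of work pursues.
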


\section{Forest Triples}\label{sec:ftrip}
Forest triples provide a way to calculate the chromatic symmetric function for a graph. First, we introduce the concept of a composition.
\begin{definition}
    Let $\alpha=(\alpha_1,\ldots,\alpha_l)$ be a tuple of positive integers. Then, $\alpha$ is a \emph{composition} of $n$ (denoted as $\alpha\models n$) if $\sum_{i=1}^l\alpha_i=n$. The length of the composition is denoted as $\len(\alpha)$, which is the number of positive integers, or \emph{parts}, in the composition. We now define a few properties on compositions. The notation $\alpha_l$ always denotes the last part of composition $\alpha$.
    
    For some $\alpha$, we denote $\alpha\setminus\alpha_i=(\alpha_1,\ldots,\alpha_{i-1},\alpha_{i+1},\ldots,\alpha_l)$. For two compositions $\alpha$ and $\beta$, we define $\alpha\cdot \beta=(\alpha_1,\ldots,\alpha_l,\beta_1,\ldots,\beta_l)$.

    We denote a tuple of compositions as $\left(\alpha^{(1)},\ldots,\alpha^{(m)}\right)$, where $m$ is the number of compositions. This allows us to index the tuple for a specific composition and a specific part of the composition. For example, the first part of the second composition can be denoted as $\alpha_1^{(2)}$.
\end{definition}
We can link a composition to a partition by sorting the elements.
\begin{definition}
    If $\alpha$ is a composition of $n$, then \emph{$\sort(\alpha)$} is the partition of $n$ formed by sorting the elements of $\alpha$ in non-increasing order.
\end{definition}
Now, we will introduce the concept of broken circuits. Recall that $G$ is a graph with labeled vertices $1$ through $n$ and ordered edges.
\begin{definition}
    For every cycle $C$ that is a subgraph of $G$, the corresponding \emph{broken circuit} $B\subset G$ is the subgraph of $C$ without the largest edge of $C$.
\end{definition}
\begin{definition}
    A \emph{non-broken} circuit $F$ of graph $G$ is a subgraph of $G$ such that no subgraph of $F$ is a broken circuit.
\end{definition}
Every non-broken circuit is a forest, since if there is a cycle in $F$, then there exists a subgraph of that cycle that is a broken circuit.
\begin{definition}
    A \emph{tree triple} is a tuple $\ttrip=(T,\alpha,r)$ where $T$ is a tree in some non-broken circuit $F$, $\alpha$ is a composition of $|T|$, and $r$ is an integer with $1\leq r\leq \alpha_1$.
\end{definition}
We can finally define what a forest triple is.
\begin{definition}
    A \emph{forest triple} $\forest$ of $G$ is a set of tree triples $\{\ttrip_1,\ldots,\ttrip_m\}$ where $\ttrip_i=(T_i,\alpha^{(i)},r_i)$ and the trees $T_1$ through $T_m$ are the trees of some non-broken circuit $F$ of graph $G$. 

    The type of a forest triple, $\type(\forest)$, is the partition formed by combining and sorting all the elements in every composition in decreasing order. The sign of a forest triple can be defined as
    \begin{equation*}\sign(\forest)=(-1)^{\sum_{i=1}^m\left(\len(\alpha^{(i)})-1\right)}.\end{equation*}
\end{definition}
\begin{definition}
    A tree triple is \emph{unit} if $\len(\alpha)=1$. A \emph{unit} forest triple is one where every tree triple is unit.
\end{definition}
\begin{remark}
    Note if $\forest$ is unit, then $\sign(\forest)=(-1)^{\sum_{i=1}^m(1-1)}=1$. Additionally, if $\ttrip=(T,\alpha,r)$ is unit, then $\alpha_1=|T|$.
\end{remark}

Let $\FT(G)$ be the set of all forest triples of $G$. We can now state the following theorem regarding forest triples.
\begin{theorem}[{{\cite[Theorem~3.4]{tom2024signed}}}]\label{thm:ftt}
    The chromatic symmetric function of any graph $G$ is
    \begin{equation}\label{eq:fte}X_G(\boldsymbol{x})=\sum_{\forest\in\FT(G)}\sign(\forest)\cdot e_{\type(\forest)}.\end{equation}
\end{theorem}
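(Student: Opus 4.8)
The plan is to derive the identity from Stanley's power-sum expansion of the chromatic symmetric function, together with a broken-circuit argument and Newton's identities. I would begin with Stanley's formula $X_G = \sum_{S\subseteq E(G)}(-1)^{|S|}p_{\lambda(S)}$ from~\cite{stanley1995symmetric}, where $\lambda(S)$ is the partition whose parts are the numbers of vertices in the connected components of the spanning subgraph $(V(G),S)$. Using the fixed total order on $E(G)$, I would then apply the classical broken-circuit sign-reversing involution to the subsets that contain a broken circuit: for such an $S$, locate a canonically chosen broken circuit $B\subseteq S$ (say the one whose completing cycle has the smallest maximal edge, breaking ties consistently), let $e$ be the edge with $B\cup\{e\}$ a cycle and $e$ maximal in that cycle, and toggle $e$ in or out of $S$. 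The feature that makes this work in the symmetric-function setting, and not just for the chromatic polynomial, is that $B$ is a path joining the two endpoints of $e$ inside $S$, so adding or deleting $e$ changes neither the vertex sets nor the sizes of the components; hence $\lambda(S)=\lambda(S\triangle\{e\})$ while $(-1)^{|S|}$ flips sign, and the paired terms cancel. What survives is $X_G=\sum_{F}(-1)^{|E(F)|}p_{\lambda(F)}$, summed over non-broken circuits $F$; as noted in the excerpt each such $F$ is a forest, and $\lambda(F)$ lists the sizes of its tree components.

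Next I would prove the scalar identity $(-1)^{n-1}p_n=\sum_{\alpha\models n}\alpha_1\,(-1)^{\len(\alpha)-1}e_\alpha$. One clean route is generating functions: with $E(t)=\sum_k e_kt^k=\prod_i(1+x_it)$ one has $t\,E'(t)/E(t)=\sum_{k\ge1}(-1)^{k-1}p_kt^k$, while peeling off the first part of a composition gives $\sum_{n\ge1}\bigl(\sum_{\beta\models n}(-1)^{\len(\beta)-1}e_\beta\bigr)t^n=1-1/E(t)$ and then $\sum_{n\ge1}\bigl(\sum_{\alpha\models n}\alpha_1(-1)^{\len(\alpha)-1}e_\alpha\bigr)t^n=t\,E'(t)\bigl(1-(1-1/E(t))\bigr)=t\,E'(t)/E(t)$, which matches. (Alternatively one inducts on $n$ using Newton's recursion $n\,e_n=\sum_{j=1}^n(-1)^{j-1}p_je_{n-j}$.)

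Finally I would assemble the two ingredients. A forest $F$ with tree components $T_1,\dots,T_m$ has $|E(F)|=\sum_i(|T_i|-1)$ and $p_{\lambda(F)}=\prod_i p_{|T_i|}$, so its term equals $\prod_{i=1}^m(-1)^{|T_i|-1}p_{|T_i|}=\prod_{i=1}^m\bigl(\sum_{\alpha^{(i)}\models|T_i|}\alpha^{(i)}_1(-1)^{\len(\alpha^{(i)})-1}e_{\alpha^{(i)}}\bigr)$ by the identity above. Expanding this product, a choice of a composition $\alpha^{(i)}$ for each component, together with the factor $\prod_i\alpha^{(i)}_1=\#\{(r_1,\dots,r_m):1\le r_i\le\alpha^{(i)}_1\}$ read as a sum over such tuples, is exactly a forest triple $\forest=\{(T_i,\alpha^{(i)},r_i)\}$ whose underlying non-broken circuit is $F$; the remaining sign $\prod_i(-1)^{\len(\alpha^{(i)})-1}$ is $\sign(\forest)$, and $\prod_i e_{\alpha^{(i)}}=e_{\type(\forest)}$. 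Summing over all $F$ collects every forest triple of $G$ exactly once and yields $X_G=\sum_{\forest\in\FT(G)}\sign(\forest)\,e_{\type(\forest)}$.

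I expect the main obstacle to be making the broken-circuit involution fully rigorous: one must check that the canonically chosen broken circuit contained in $S$ (and hence the toggled edge) is unchanged when that edge is added or removed, so that the map is a genuine fixed-point-free involution on the subsets of $E(G)$ containing a broken circuit, and that it preserves the multiset of component sizes in every case. Granting the standard Whitney-type argument for that, the remaining steps—the generating-function identity and the term-by-term expansion—are routine bookkeeping.
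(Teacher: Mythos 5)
Your proposal is correct: the Whitney-style cancellation on Stanley's power-sum expansion, the composition identity $(-1)^{n-1}p_n=\sum_{\alpha\models n}\alpha_1(-1)^{\len(\alpha)-1}e_\alpha$, and the reading of the factor $\alpha^{(i)}_1$ as the choice of $r_i$ assemble exactly into the forest-triple expansion, and the canonical-broken-circuit involution you sketch can be made rigorous (the set of broken circuits in $S$ whose missing edge equals the toggled edge $e$ is unchanged by the toggle, so the choice is stable). This paper does not prove Theorem~\ref{thm:ftt} but quotes it from \cite{tom2024signed}, and your derivation is essentially the same route taken there, so there is nothing to reconcile beyond that citation.
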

\begin{remark}
Note that given a graph $G$ with labeled vertices, the set $\FT(G)$ depends on how the edges are ordered. However, the formula in Equation~\ref{eq:fte} provides the same result for any arbitrary ordering of the edges, since the chromatic symmetric function of a graph does not depend on any ordering of the edges.
\end{remark}
\begin{example}
Figure~\ref{fig:P2ftrip} shows the four forest triples of the path graph on 2 vertices, $P_2$, and how the sum of $\sign(\forest)\cdot e_{\type(\forest)}$ results in $X_{P_2}(\boldsymbol{x})=2e_{(2)}$ as calculated in Equation~\ref{eqn:csfP2}.
\end{example}
\begin{figure}[!htb]
\centering
\caption{All forest triples for $P_2$.}
\vspace{0.2cm}
    \begin{tikzpicture}[scale=1, font=\footnotesize]

\vertex[black][left]{1}{(0,0)}
\vertex[black][right]{2}{(3,0)}

\node[] at (1.5,0.4) {$\alpha=(1,1), r=1$};
\draw (v1)--(v2);

\vertex[black][left]{1}{(0,1.5)}['];
\vertex[black][right]{2}{(3,1.5)}['];

\node[] at (0,2.1) {$\alpha=(1), r=1$};
\node[] at (3,2.1) {$\alpha=(1), r=1$};

\vertex[black][above]{1}{(7.5,0)}[''];
\vertex[black][above]{2}{(10.5,0)}[''];
\node[] at (9,0.6) {$\alpha=(2), r=2$};
\draw (v''1)--(v''2);

\vertex[black][above]{1}{(7.5,1.5)}['''];
\vertex[black][above]{2}{(10.5,1.5)}['''];

\node[] at (9,2.1) {$\alpha=(2), r=1$};
\draw (v'''1)--(v'''2);
\draw[help lines] (-1.2,-0.5)--(-1.2,2.5)--(13.8,2.5)--(13.8,-0.5)--cycle;
\draw[help lines] (-1.2,2.5)--(-1.2,3)--(13.8,3)--(13.8,2.5);
\draw[help lines] (4.2,-0.5)--(4.2,3);
\node[] at (5.6,2.75) {$\sign(\forest)\cdot e_{\type(\forest)}$};
\node[] at (1.5,2.75) {Forest Triples};
\node[] at (9,2.75) {Forest Triples};
\node[] at (12.4,2.75) {$\sign(\forest)\cdot e_{\type(\forest)}$};
\draw[help lines] (7.0,-0.5)--(7.0,3);
\draw[help lines] (11.0,-0.5)--(11.0,3);
\draw[help lines] (-1.2, 0.9)--(13.8,0.9);
\node[] at (5.6,1.7) {\huge $e_{(1,1)}$};
\node[] at (5.6,0.2) {\huge $-e_{(1,1)}$};
\node[] at (12.4,1.7) {\huge $e_{(2)}$};
\node[] at (12.4,0.2) {\huge $e_{(2)}$};
\end{tikzpicture}
\label{fig:P2ftrip}
\end{figure}
\section{Sign-Reversing Involutions}\label{sec:sri}
Now, we describe a way to prove a graph is $e$-positive using forest triples.
\begin{definition}
    A \emph{sign-reversing involution} is a function $\varphi:\FT(G)\rightarrow\FT(G)$ with the following properties:
    \begin{enumerate}
        \item It is an involution, meaning $\varphi(\varphi(\forest))=\forest$.
        \item It preserves type, so $\type(\forest)=\type(\varphi(\forest))$.
        \item If $\forest\neq \varphi(\forest)$, then $\sign(\forest)\neq \sign(\varphi(\forest))$.
        \item If $\forest=\varphi(\forest)$, then $\sign(\forest)=1$, and we say that $\forest$ is a fixed point.
    \end{enumerate}
\end{definition}
If there exists a sign-reversing involution on $\FT(G)$, then we can pair every forest triple with a negative sign to a non-fixed forest triple with a positive sign, meaning
\begin{equation}\label{eq:sro}
    X_G(\boldsymbol{x})=\sum_{\substack{\forest\in\FT(G)\\\text{$\forest$ is fixed under $\varphi$}}}e_{\type(\forest)}.
\end{equation}

We will look at sign-reversing involutions with an additional property.

\begin{definition}
    For $\forest\in \FT(G)$, let $\ttrip_{\min}=(T_{\min},\alpha^{(\min)},r_{\min})$ reference the unique tree triple where the smallest vertex of $G$ is in $V(T_{\min})$.
\end{definition}

\begin{definition}
    Let $\varphi:\FT(G)\rightarrow\FT(G)$ be a sign-reversing involution. Suppose all fixed points in $\varphi$ are units. If for all $\forest\in\FT(G)$, letting $\forest'=\varphi(\forest)$ with $\ttrip'_{\min}=(T'_{\min},\alpha^{(\min)\prime},r'_{\min})\in\forest'$, we have $\alpha^{(\min)}_1=\alpha^{(\min)\prime}_{1}$ and $r_{\min}=r'_{\min}$, then $\varphi$ is a \emph{first-preserving involution}.
\end{definition}
\begin{definition}
    Let \emph{$\FT^{(i)}(G)$} be the set of forest triples where $\alpha_1^{(\min)}=i$ and $r_{\min}=1$.
    
    Letting $\forest=\{\ttrip_{\min},\ttrip_2,\ldots,\ttrip_m\}$, we define $\type'(\forest)=\sort(\alpha^{(\min)}\setminus \alpha_1^{(\min)}\cdot \alpha^{(2)}\cdots \alpha^{(m)})$. Note that $e_{\type'(\forest)}=e_{\type(\forest)} \mathbin{/} {e_{\alpha_1^{(\min)}}}$.
\end{definition}
If $\FT(G)$ has a first-preserving involution, then there exists sign-reversing involutions on the subsets $\FT^{(i)}(G)$.
\begin{definition}
    For a graph $G$ and integer $i$, we denote
    \begin{equation}\label{eqn:nicecsf}
        X^{(i)}_G(\boldsymbol{x})=\sum_{\substack{
            \forest\in\FT^{(i)}(G)
        }}\sign(\forest)\cdot e_{\type'(\forest)}.
    \end{equation}
    We see $X^{(i)}_G$ is a homogeneous symmetric function of degree $|G|-i$.
\end{definition}
Note that $X^{(i)}_G$ has non-negative coefficients for all integers $i$ if and only if $G$ has a first-preserving involution $\varphi$, letting Equation~\ref{eqn:nicecsf} be rewritten as
\begin{equation}\label{eqn:nicecsfinv}
    X^{(i)}_G(\boldsymbol{x})=\sum_{\substack{
            \forest\in\FT^{(i)}(G)\\
            \text{$\forest$ fixed under $\varphi$}
        }}e_{\type'(\forest)}.
\end{equation}
The chromatic symmetric function can be written as
\begin{equation}\label{eqn:nicecsftocsf}
    X_G(\boldsymbol{x})=\sum_{i=1}^{|G|}e_i\cdot i \cdot X_G^{(i)}(\boldsymbol{x}).
\end{equation}

\begin{example}
    Returning to the forest triples of $G=P_2$ shown in Figure~\ref{fig:P2ftrip}, we have $X_G^{(1)}(\boldsymbol{x})=0$ and $X_G^{(2)}(\boldsymbol{x})=1$.
\end{example}
The first author proved in \cite[Theorem~4.10]{tom2024signed} that all $K\!$-chains, which are cliques joined at single vertices, have a first-preserving involution (for a specific labeling of vertices and ordering of edges). It is conjectured in \cite[Section~5]{tom2024signed} that all natural unit interval graphs with a specific ordering of edges have a first-preserving involution.

To prove that adjacent cycle chains are $e$-positive, we will try to find a first-preserving involution for them. We will first look at cycles.

\section{Cycle Graphs}\label{sec:cgi}
In this section, we find a first-preserving involution on the set of forest triples of cycle graphs.
\begin{definition}
For $a\in\mathbb{N}$, we define $C_a=\left([a],E=\{(1,2)<\cdots<(a-1,a)<(a,1)\}\right)$ as the cycle graph with $a$ vertices.
\end{definition}
Previous papers have found the chromatic symmetric function for cycle graphs. We will prove the same result using forest triples.
\begin{theorem}[{{\cite[Corollary 6.2]{Ellzey2017ADG}}}]\label{thm:cg}
The cycle graph with $a$ vertices, $C_a$, has a chromatic symmetric function equal to
\begin{equation}\label{eqn:cyclecsf}X_{C_a}(\boldsymbol{x})=\sum_{\alpha\models a}e_{\sort(\alpha)}\cdot \alpha_1\cdot (\alpha_1-1)\cdots (\alpha_l-1).\end{equation}
\end{theorem}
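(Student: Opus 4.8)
The plan is to apply Theorem~\ref{thm:ftt} directly to $C_a$ with the given edge ordering, and to organize the sum over forest triples by the composition $\alpha^{(\min)}$ attached to the single tree that appears in each forest triple. The first step is to describe the non-broken circuits of $C_a$: the unique cycle in $C_a$ is the whole graph, its largest edge is $(a,1)$, so the only broken circuit is the path $1-2-\cdots-a$ (on edges $(1,2)<\cdots<(a-1,a)$), and hence the non-broken circuits are exactly the forests obtained by deleting at least one edge from $C_a$ \emph{other than} forbidding the full path; concretely, a non-broken circuit is either all of $E(C_a)$ minus one edge that is not the minimal forbidden configuration, or a proper sub-forest, and in all cases it is a disjoint union of paths along the cycle. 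Since a forest triple of $C_a$ consists of tree triples on the trees of one non-broken circuit, and every such tree is a path, I would next count forest triples whose underlying non-broken circuit has a prescribed ``shape.''

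The key reduction is that every forest triple of $C_a$ is a \emph{single} tree triple unless the non-broken circuit is the empty edge set — and even then one gets the unit forest triple on $a$ isolated vertices, which I will fold into the $\alpha=(1,1,\ldots,1)$ term. So the main computation is: for a fixed composition $\alpha=(\alpha_1,\ldots,\alpha_l)\models a$, count the tree triples $(T,\alpha,r)$ where $T$ is a path arising as a tree of a non-broken circuit of $C_a$ and $1\le r\le\alpha_1$. I expect the count to factor as follows. Choosing $T$ to be a path on $|T|$ consecutive vertices of the cycle and recording where the ``cuts'' fall, together with the bookkeeping of which missing edge is the deleted-largest-edge of the cycle, should produce a factor of $\alpha_1$ from the choice of $r$, and a factor of $(\alpha_i-1)$ for each $i\ge 2$ — and, crucially, a factor of $(\alpha_1-1)$ somewhere gets \emph{replaced} by a factor of $1$ because of the special role of the edge $(a,1)$; comparing with~\eqref{eqn:cyclecsf}, where the first factor is $\alpha_1$ and not $\alpha_1(\alpha_1-1)$, this is exactly the asymmetry we must see. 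The sign $(-1)^{\len(\alpha)-1}$ must then cancel: the terms with $\len(\alpha)\ge 2$ that do not survive must be matched in sign by those that do, which is automatic once the counts are shown to be the claimed nonneg\-ative products, because~\eqref{eqn:cyclecsf} already has a manifestly positive coefficient of $e_{\sort(\alpha)}$, so in fact I want to show the \emph{signed} count over all forest triples of type $\sort(\lambda)$ collapses to~$\eqref{eqn:cyclecsf}$.

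I would carry this out in the following order: (1) identify all non-broken circuits of $C_a$ explicitly and note each is a union of paths; (2) set up a bijection between forest triples $\forest$ with $\type(\forest)=\sort(\alpha)$ and combinatorial data (a cyclic arrangement of the parts of $\alpha$ around the cycle, a distinguished part playing the role of $T$, a choice of $r$, and a choice of the ``restored'' largest edge), tracking $\sign(\forest)$; (3) perform the signed sum, observing the inclusion--exclusion over which edges are present collapses each $(\alpha_i-1)$-type factor; (4) assemble the product $\alpha_1(\alpha_1-1)\cdots(\alpha_l-1)$ and sum over $\alpha\models a$. The main obstacle is step~(2)/(3): correctly accounting for the interaction between the ``largest edge'' convention defining broken circuits and the tree-triple compositions, so that the $\alpha_1$ factor (rather than $\alpha_1-1$) emerges for the part containing vertex~$1$ while a genuine sign-reversing cancellation kills all the higher-length contributions except the surviving positive one. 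This is the delicate combinatorial core; once it is pinned down, summing over $\alpha\models a$ is routine. (An alternative, possibly cleaner, route is to exhibit directly a first-preserving involution on $\FT(C_a)$ — pairing a non-unit forest triple with a unit one by merging/splitting the block containing the smallest vertex along the restored edge — and then read off~\eqref{eqn:cyclecsf} from~\eqref{eqn:nicecsftocsf}; this is in fact what Section~\ref{sec:cgi} will need anyway for the later cycle-chain argument, so I would prove the involution version and derive the formula as a corollary.)
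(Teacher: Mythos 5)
Your proposal has a genuine gap, and it begins with a false structural claim. The ``key reduction'' that every forest triple of $C_a$ is a single tree triple (apart from the empty edge set) is not true: a non-broken circuit of $C_a$ is any edge subset missing at least one of the edges $(1,2),\ldots,(a-1,a)$, and deleting $k$ edges from the cycle leaves $k$ paths, so forest triples of $C_a$ can have arbitrarily many tree triples (the paper's Figure~\ref{fig:cycleexamplebreak} already shows forest triples with two and three tree triples). Consequently the coefficient of $e_{\sort(\alpha)}$ is not a count of single tree triples $(T,\alpha,r)$; contributions of a given type come from many ways of distributing the parts of $\alpha$ among several tree triples, each carrying its own composition, $r$-value and sign, and your product bookkeeping does not apply as stated. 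Moreover, your argument that the signs ``must then cancel \ldots which is automatic once the counts are shown to be the claimed nonnegative products, because~\eqref{eqn:cyclecsf} already has a manifestly positive coefficient'' is circular: \eqref{eqn:cyclecsf} is exactly what is to be proved, so the cancellation cannot be inferred from it. That cancellation is the entire content of the proof, and your steps (2)--(3) leave it as an acknowledged ``delicate combinatorial core'' rather than supplying a mechanism.

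The paper resolves precisely this point by an explicit first-preserving involution on $\FT(C_a)$: a join/break pair $\tjoin_i,\tbreak_i$ acting at the first tree triple with $\len(\alpha^{(i)})\geq 2$ or $r_i=1$, a separate rotate-join/rotate-break step to handle the wrap-around case $\alpha_l^{(m)}>a-v_m+1$ (where a naive break would reorder the tree triples, as in Subfigure~\ref{subfig:cycexamplebreak:thr}), and fixed points exactly the unit forest triples with $(a,1)\in E(T_m)$ and $r_1,\ldots,r_{m-1}\geq 2$, which are then counted to give $\beta_1(\beta_1-1)\cdots(\beta_l-1)$. Your parenthetical alternative points in this direction but is wrong in detail: the involution cannot simply ``pair a non-unit forest triple with a unit one'' (generic pairs are two non-unit triples differing by one break, and many unit forest triples are not fixed points), it must preserve $\alpha_1^{(\min)}$ and $r_{\min}$ rather than merge/split ``the block containing the smallest vertex'' freely, and the rotation step --- the real difficulty your sketch does not anticipate --- is needed exactly when the break would cross the edge $(a,1)$. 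Without a concrete involution (or an equivalent inclusion--exclusion identity) the proposal does not establish \eqref{eqn:cyclecsf}.
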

\begin{example}
    The graph $C_6$ has
    \begin{equation*}\label{eqn:c6csf}
        X_{C_6}\oset[0.31cm]{\alpha\hspace{0.1cm}=}{(\boldsymbol{x})}=\overbrace{30e_6}^{(6)}+\overbrace{12e_{4,2}}^{(4,2)}+\overbrace{12e_{3,3}}^{(3,3)}+\overbrace{6e_{4,2}}^{(2,4)}+\overbrace{2e_{2,2,2}}^{(2,2,2)}.
    \end{equation*}
\end{example}
    Let $\forest=\{\ttrip_1=(T_1,\alpha^{(1)},r_1),\ldots,\ttrip_m\}\in \FT(C_a)$ be a forest triple with $m$ tree triples. We order the tree triples such that $1\in V(T_m)$ and for $1\leq i<j\leq m-1$, we have $\min(V(T_i))<\min(V(T_j))$.
    
    For each $\ttrip=(T,\alpha,r)\in \forest$, we can find the size of $T$ based on $\alpha$ (since $\alpha\models |T|$). Thus we can identify $\ttrip$ with a tuple $(v,\alpha,r)$, where $v$ is the vertex such that $V(T)=\{(v,v+1),\ldots, (v+|T|-2,v+|T|-1)\}$, with vertices taken mod $a$. We then denote forest triples as
    \begin{equation*}
    \forest=\left\langle(v_1,\alpha^{(1)},r_1),\ldots,(v_m,\alpha^{(m)},r_m)\right\rangle,
    \end{equation*}
    where $T_i$ has edges $\{(v_i,v_i+1),\ldots,(v_{i+1}-2,v_{i+1}-1)\}\!\!\mod a$ and $\ttrip_i=(T_i,\alpha^{(i)},r_i)$. A first-preserving involution preserves $\alpha^{(m)}_1$ and $r_m$, since vertex $T_{\min}=T_m$.
    \begin{examplen}
        An example is given in Subfigure~\ref{subfig:cycexamplebreak:norm}.
    \end{examplen}

    To define a first-preserving involution on $\FT(C_a)$, we break it into disjoint subsets such that every forest triple is in exactly one subset. Then, if there exists a first-preserving involution on each subset, they can be combined together to form a first-preserving involution on $\FT(C_a)$.
    
    Say $\gamma:S_1\rightarrow S_2$ is a bijective function between two disjoint subsets of $\FT(C_a)$, where $\gamma$ preserves $\type(\forest)$ and $\alpha_1^{(m)}$ and $r_m$, but reverses $\sign(\forest)$. Then, we can define a first-preserving involution $\varphi$ on $S_1\cup S_2$, where $\varphi$ either applies $\gamma$ or its inverse. This involution has no fixed points.

\begin{proof}[Proof of Theorem~\ref{thm:cg}]
    We now define the involution in three steps, breaking $\FT(C_a)$ into five subsets.
    \paragraph{Step 1:}
    We define sets $A$ and $B$ with indexed subsets $A_i$ and $B_i$ for $i\in \mathbb{N}$ as
    \begin{gather*}
        A_i=\left\{\forest\in\FT(C_a):m-1\geq i\scolon r_i=1\scolon \len(\alpha^{(1)})=\cdots=\len(\alpha^{(i)})=1\scolon r_1,\ldots,r_{i-1}\geq 2\right\},\\
        B_i=\{\forest\in\FT(C_a):m\geq i\scolon\len(\alpha^{(i)})\geq 2\scolon\len(\alpha^{(1)})=\cdots=\len(\alpha^{(i-1)})=1\scolon r_1,\ldots,r_{i-1}\geq 2\scolon\\ \text{either $m-1\geq i$ or $\alpha_l^{(m)}\leq a-v_m+1$}\}.
    \end{gather*}
    Intuitively, if $\forest\in A_i$ or $\forest\in B_i$, then $i$ is the smallest integer such that either $\len(\alpha^{(i)})\geq 2$ or $r_i=1$. We define function $\tjoin_i:A_i\rightarrow B_i$, where $\tjoin_i(\forest)$ replaces $\ttrip_i$ and $\ttrip_{i+1}$ in $\forest$ with tree triples
    \begin{equation*}
       S=\left(v_{i},\alpha^{(i+1)}\cdot\alpha^{(i)},r_{i+1}\right).
    \end{equation*}
    The inverse map $\tbreak_i:B_i\rightarrow A_i$ replaces $\ttrip_i$ in $\forest$ with
    \begin{equation*}
        S_1=\left(v_i,(\alpha_l^{(i)}),1\right),S_2=\left(v_i+\alpha_l^{(i)},\alpha^{(i)}\setminus \alpha_l^{(i)}, r_i\right).
    \end{equation*}

    Notice that if $\forest\in A_i$, then tree triples $\ttrip_1$ through $\ttrip_{i-1}$ all have $\len(\alpha)=1$ and $r\geq 2$. Those tree triples are untouched in $\tjoin_i(\forest)$, while $\ttrip_i$ and $\ttrip_{i+1}$ are replaced by $S$. Since $S$ has a composition with length at least 2, then $\tjoin_i(\forest)\in B_i$. Applying $\tbreak_i(\tjoin_i(\forest))$ breaks $S$ while preserving the other tree triples, resulting in $\forest$ again. Thus, $\tjoin_i$ and $\tbreak_i$ are inverses, meaning $\tjoin_i$ is a bijection.

    If $\forest\in A_{m-1}$, then $\tjoin_i(\forest)$ replaces $\ttrip_{m-1}$ and $\ttrip_{m}$ with $S$. Since every $\forest$ has $|T_{m-1}|\leq a-v_{m-1}+1$, then after joining $S$ has $\alpha_l^{(m)}\leq a-v_m+1$, explaining that condition. Also note that $\tbreak_i$ preserves $\type(\forest)$, $\alpha_1^{(m)}$, $r_m$, and reverses $\sign(\forest)$. We then use this function to form a first-preserving involution on sets $A\cup B$.

    \begin{examplen}
        Figure~\ref{fig:cycleexamplebreak} shows examples of how $\tjoin_i$ and $\tbreak_i$ work, where we either join the light and dark blue tree triples or break the blue tree triple. In Subfigure~\ref{subfig:cycexamplebreak:twn}, joining $\ttrip_{m-1}$ to $\ttrip_m$ in $\forest_A$ with $m=3$ results in $\forest_B$ having $2=\alpha_l^{(m)}\leq a-v_m+1=2$, hence we can do $\tbreak_2(\forest_B)$ to recover $\forest_A$. Subfigure~\ref{subfig:cycexamplebreak:thr} has $\forest$ with $m=2$ where $3=\alpha_l^{(m)}>a-v_m+1=2$, and trying $\tbreak_2(\forest)$ changes the order of the tree triples and moreover results in $\tbreak_2(\forest)\in A_1$, which is a problem.
    \end{examplen}

    \begin{figure}[!htb]
\caption{Examples of $\tjoin$ and $\tbreak$ for $\forest\in \FT(C_6)$.}
\begin{subfigure}{\textwidth}
\centering
     \begin{tikzpicture}[font=\footnotesize]
\tikzmath{\h0=(4.75-(1+2*cos(60))+3.5;}

\vertex[red]{1}[(\h0,0)]{({cos(60)},{sin(60)})}
\vertex[blue]{2}[(\h0,0)]{({1+cos(60)},{sin(60)})}
\vertex[blue]{3}[(\h0,0)]{({1+2*cos(60)},0)}
\vertex[blue]{4}[(\h0,0)]{({1+cos(60)},{-sin(60)})}
\vertex[blue]{5}[(\h0,0)]{({cos(60)},{-sin(60)})}
\vertex[red]{6}[(\h0,0)]{(0,0)}
\draw[red] (v1)--(v6);
\draw[blue] (v2)--(v3)--(v4)--(v5);
\node[above right=0.2cm and 0.1cm of v3] {$\textcolor{blue}{S=\ttrip_1}$};
\node[above left=0.2cm and 0.1cm of v6] {$\textcolor{red}{\ttrip_m}$};
\node[text width=5cm, align=center] at ({(2*cos(60)+1)/2+\h0},2) {${\forest_B\in B_1,\quad \forest_B=\tjoin_1(\forest_A)=}$\\$\left\langle\textcolor{blue}{\Big(2,(3,\textcolor{lightblue}{\textbf{1}}),3\Big)},\textcolor{red}{\Big(6,(1,1),1\Big)}\right\rangle$};
\draw[<->, line width=0.1cm] (3.75,0) -- (4.5,0);
\tikzmath{\h1=0;}
\vertex[red]{1}[(\h1,0)]{({cos(60)},{sin(60)})}[']
\vertex[lightblue]{2}[(\h1,0)]{({1+cos(60)},{sin(60)})}[']
\vertex[blue]{3}[(\h1,0)]{({1+2*cos(60)},0)}[']
\vertex[blue]{4}[(\h1,0)]{({1+cos(60)},{-sin(60)})}[']
\vertex[blue]{5}[(\h1,0)]{({cos(60)},{-sin(60)})}[']
\vertex[red]{6}[(\h1,0)]{(0,0)}[']
\draw[red] (v'1)--(v'6);
\draw[blue] (v'3)--(v'4)--(v'5);
\node[above right=-0.3cm and 0.3cm of v'2] {$\textcolor{lightblue}{S_1=\ttrip_1}$};
\node[below right=0.2cm and 0cm of v'3] {$\textcolor{blue}{S_2=\ttrip_2}$};
\node[above left=0.2cm and 0.1cm of v'6] {$\textcolor{red}{\ttrip_m}$};
\node[text width=6cm, align=center] at ({(2*cos(60)+1)/2+\h1-0.4},2) {${\forest_A\in A_1,\quad \forest_A=\tbreak_1(\forest_B)=}$\\$\left\langle\textcolor{lightblue}{\Big(2,(\textbf{1}),1\Big)},\textcolor{blue}{\Big(3,(3),3\Big)},\textcolor{red}{\Big(6,(1,1),1\Big)}\right\rangle$};
\end{tikzpicture}
\vspace{2mm}
     \caption{Normal joining and breaking.}
     \label{subfig:cycexamplebreak:norm}
\end{subfigure}
\begin{subfigure}{\textwidth}
\centering
     \begin{tikzpicture}
     \tikzmath{\h0=(4.75-(1+2*cos(60))+3.5;}
\vertex[blue]{1}[(\h0,0)]{({cos(60)},{sin(60)})}
\vertex[blue]{2}[(\h0,0)]{({1+cos(60)},{sin(60)})}
\vertex[red]{3}[(\h0,0)]{({1+2*cos(60)},0)}
\vertex[red]{4}[(\h0,0)]{({1+cos(60)},{-sin(60)})}
\vertex[blue]{5}[(\h0,0)]{({cos(60)},{-sin(60)})}
\vertex[blue]{6}[(\h0,0)]{(0,0)}
\draw[blue] (v2)--(v1)--(v6)--(v5);
\draw[red] (v3)--(v4);
\node[below right=0.2cm and 0cm of v3] {$\textcolor{red}{\ttrip_1}$};
\node[above left=0.2cm and 0.1cm of v6] {$\textcolor{blue}{S=\ttrip_m}$};
\node[text width=5cm, align=center] at ({(2*cos(60)+1)/2+\h0},2) {${\forest_B\in B_2,\quad \forest_B=\tjoin_2(\forest_A)=}$\\$\left\langle\textcolor{red}{\Big(3,(2),2\Big)},\textcolor{blue}{\Big(5,(1,1,\textcolor{lightblue}{\textbf{2}}),1\Big)}\right\rangle$};
\draw[<->, line width=0.1cm] (3.75,0) -- (4.5,0);
\tikzmath{\h1=0;}
\vertex[blue]{1}[(\h1,0)]{({cos(60)},{sin(60)})}[']
\vertex[blue]{2}[(\h1,0)]{({1+cos(60)},{sin(60)})}[']
\vertex[red]{3}[(\h1,0)]{({1+2*cos(60)},0)}[']
\vertex[red]{4}[(\h1,0)]{({1+cos(60)},{-sin(60)})}[']
\vertex[lightblue]{5}[(\h1,0)]{({cos(60)},{-sin(60)})}[']
\vertex[lightblue]{6}[(\h1,0)]{(0,0)}[']
\draw[blue] (v'1)--(v'2);
\draw[red] (v'3)--(v'4);
\draw[lightblue] (v'5)--(v'6);
\node[text width=6cm, align=center] at ({(2*cos(60)+1)/2+\h1-0.3},2) {${\forest_A\in A_2,\quad \forest_A=\tbreak_2(\forest_B)=}$\\$\left\langle\textcolor{red}{\Big(3,(2),2\Big)},\textcolor{lightblue}{\Big(5,(\textbf{2}),1\Big)},\textcolor{blue}{\Big(1,(1,1),1\Big)}\right\rangle$};
\node[above right=-0.3cm and 0.3cm of v'2] {$\textcolor{blue}{S_2=\ttrip_m}$};
\node[below right=0.2cm and 0cm of v'3] {$\textcolor{red}{\ttrip_1}$};
\node[below left=0.2cm and 0cm of v'6] {$\textcolor{lightblue}{S_1=\ttrip_2}$};
\end{tikzpicture}
\vspace{2mm}
     \caption{Joining $\ttrip_{m-1}$ to $\ttrip_m$ and breaking $\ttrip_{m}$.}
     \label{subfig:cycexamplebreak:twn}
\end{subfigure}
\begin{subfigure}{\textwidth}
\centering
    \begin{tikzpicture}[scale=1,font=\scriptsize]
     \tikzmath{\h0=0;}
\vertex[blue]{1}[(\h0,0)]{({cos(60)},{sin(60)})}
\vertex[blue]{2}[(\h0,0)]{({1+cos(60)},{sin(60)})}
\vertex[red]{3}[(\h0,0)]{({1+2*cos(60)},0)}
\vertex[red]{4}[(\h0,0)]{({1+cos(60)},{-sin(60)})}
\vertex[blue]{5}[(\h0,0)]{({cos(60)},{-sin(60)})}
\vertex[blue]{6}[(\h0,0)]{(0,0)}
\draw[blue] (v2)--(v1)--(v6)--(v5);
\draw[red] (v3)--(v4);
\node[below right=0.2cm and 0cm of v3] {$\textcolor{red}{\ttrip_1}$};
\node[above left=0.2cm and 0.1cm of v6] {$\textcolor{blue}{\ttrip_m}$};
\node[text width=5cm, align=center] at ({(2*cos(60)+1)/2+\h0},2) {${\forest\not\in B,\quad \forest=}$\\$\left\langle\textcolor{red}{\Big(3,(2),2\Big)},\textcolor{blue}{\Big(5,(1,3),1\Big)}\right\rangle$};
\draw[<->, line width=0.1cm] (3.25,0) -- (4,0);
\draw[line width=0.5mm] ({3.625+0.3},0.3)--({3.625-0.3},-0.3) ({3.625+0.3},-0.3)--({3.625-0.3},0.3);
\tikzmath{\h1=(3.25-(1+2*cos(60))+4;}
\vertex[red]{1}[(\h1,0)]{({cos(60)},{sin(60)})}[']
\vertex[lightblue]{2}[(\h1,0)]{({1+cos(60)},{sin(60)})}[']
\vertex[blue]{3}[(\h1,0)]{({1+2*cos(60)},0)}[']
\vertex[blue]{4}[(\h1,0)]{({1+cos(60)},{-sin(60)})}[']
\vertex[red]{5}[(\h1,0)]{({cos(60)},{-sin(60)})}[']
\vertex[red]{6}[(\h1,0)]{(0,0)}[']
\draw[blue] (v'3)--(v'4);
\draw[red] (v'5)--(v'6)--(v'1);
\node[above right=-0.3cm and 0.3cm of v'2] {$\textcolor{lightblue}{\ttrip_1}$};
\node[below right=0.2cm and 0cm of v'3] {$\textcolor{blue}{\ttrip_2}$};
\node[above left=0.2cm and 0.1cm of v'6] {$\textcolor{red}{\ttrip_m}$};
\node[text width=6cm, align=center] at ({(2*cos(60)+1)/2+\h1},2) {${\forest_A\in A_1,\quad \forest_A=\tbreak_1(\forest_B)=}$\\$\left\langle\textcolor{lightblue}{\Big(2,(\textbf{1}),1\Big)},\textcolor{blue}{\Big(3,(2),2\Big)},\textcolor{red}{\Big(5,(3),1\Big)}\right\rangle$};
\draw[<->, line width=0.1cm] ({3.25+\h1},0) -- ({4+\h1},0);
\vertex[red]{1}[(2*\h1,0)]{({cos(60)},{sin(60)})}[']
\vertex[blue]{2}[(2*\h1,0)]{({1+cos(60)},{sin(60)})}[']
\vertex[blue]{3}[(2*\h1,0)]{({1+2*cos(60)},0)}[']
\vertex[blue]{4}[(2*\h1,0)]{({1+cos(60)},{-sin(60)})}[']
\vertex[red]{5}[(2*\h1,0)]{({cos(60)},{-sin(60)})}[']
\vertex[red]{6}[(2*\h1,0)]{(0,0)}[']
\draw[blue] (v'2)--(v'3)--(v'4);
\draw[red] (v'5)--(v'6)--(v'1);
\node[above right=0.2cm and 0.1cm of v'3] {$\textcolor{blue}{\ttrip_1}$};
\node[above left=0.2cm and 0.1cm of v'6] {$\textcolor{red}{\ttrip_m}$};
\node[text width=6cm, align=center] at ({(2*cos(60)+1)/2+2*\h1},2) {${\forest_B\in B_1,\quad \forest_B=\tjoin_1(\forest_A)=}$\\$\left\langle\textcolor{blue}{\Big(2,(2,\textcolor{lightblue}{\textbf{1}}),2\Big)},\textcolor{red}{\Big(5,(3),1\Big)}\right\rangle$};
\end{tikzpicture}
     \caption{Trying $\tbreak_2(\forest)$ when $\alpha_l^{(m)}>a-v_m+1$ messes the order of tree triples.}
     \label{subfig:cycexamplebreak:thr}
\end{subfigure}
\label{fig:cycleexamplebreak}
\end{figure}
    \paragraph{Step 2:}
    To take care of the forest triples like the one on the left in Subfigure~\ref{subfig:cycexamplebreak:thr}, we will rotate the entire graph until we can break $\ttrip_m$ while preserving $\alpha_1^{(m)}$ and $r_m$.

    We define sets $C,D$ with indexed subsets $C_i$ and $D_i$ for $i\in\mathbb{N}$ as
    \begin{gather*}
        C_i=\{(a,1)\not\in E(T_m)\scolon \len(\alpha^{(1)})=\cdots=\len(\alpha^{(m-1)})=1\scolon r_1,\ldots,r_{m-1}\geq 2\scolon r_{m-1}-1=i\},\\
        D_i=\{(a,1)\in E(T_m)\scolon \len(\alpha^{(1)})=\cdots=\len(\alpha^{(m-1)})=1\scolon r_1,\ldots, r_{m-1}\geq 2\scolon\\ \len(\alpha^{(m)})\geq 2\scolon \alpha_l^{(m)}\geq a-v_m+2\scolon \alpha_l^{(m)}-a+v_m-1=i\}.
    \end{gather*}
    In other words, we now consider $\forest\in \FT(C_a)$ where $\forest\not\in A\cup B$. If edge $(a,1)\not\in E(T_m)$, then we put $\forest\in C_i$, where $i=r_{m-1}-1$. If edge $(a,1)\in E(T_m)$ and $\len(\alpha^{(m)})\geq 2$, then we put $\forest\in D_i$ where $i=\alpha_l^{(m)}-a+v_m-1$.

    We define function $\rotatejoin_i:C_i\rightarrow D_i$ as
    \begin{multline*}
        \rotatejoin_i(\forest)=\Big\langle(v_1+i,\alpha^{(1)},r_1),\ldots,(v_{m-2}+i,\alpha^{(m-2)},r_{m-2}),\\(v_{m-1}+i,\alpha^{(m)}\cdot\alpha^{(m-1)},r_m)\Big\rangle.
    \end{multline*}
    Say $\forest_C\in C_i$, and $\forest_D=\rotatejoin_i(\forest_C)$. Note that in $\forest_C$, we have $\alpha_1^{(m-1)}=a-v_{m-1}+1$. Then,
    \begin{align*}
        \text{$(v_m)$ in $\forest_D$}&=\text{$(v_{m-1}+i)$ in $\forest_C$, so}\\
        \text{$(\alpha_l^{(m)}-a+v_m-1)$ in $\forest_D$}&=\text{$(\alpha_1^{(m-1)}-a+v_{m-1}+i-1)$ in $\forest_C$}\\
        &=i,
    \end{align*}
    So $\forest_D\in D_i$. The inverse map $\rotatebreak_i:D_i\rightarrow C_i$ is defined as
    \begin{multline*}
        \rotatebreak_i(\forest)=\Big\langle (v_1-i, \alpha^{(1)},r_1),\ldots, (v_{m-1}-i,\alpha^{(1)},r_m),\\ (v_m-i, (\alpha_l^{(m)}), i+1), (1, \alpha^{(m)}\setminus \alpha_l^{(m)}, r_m)\Big\rangle.
    \end{multline*}
    We turn this bijection into a first-preserving involution on set $C\cup D$.
    \begin{examplen}
        Figure~\ref{fig:cycCDinvolution} shows an example of $\rotatejoin_1$ and $\rotatebreak_1$, using the same forest triple in Subfigure~\ref{subfig:cycexamplebreak:thr} that could not be broken with normal $\tbreak$.
    \end{examplen}

    \begin{figure}[!htb]
\caption{Example of $\rotatejoin$ and $\rotatebreak$.}
\centering
     \begin{tikzpicture}
\tikzmath{\h0=5;}
\vertex[blue]{1}[(\h0,0)]{({cos(60)},{sin(60)})}
\vertex[blue]{2}[(\h0,0)]{({1+cos(60)},{sin(60)})}
\vertex[red]{3}[(\h0,0)]{({1+2*cos(60)},0)}
\vertex[red]{4}[(\h0,0)]{({1+cos(60)},{-sin(60)})}
\vertex[blue]{5}[(\h0,0)]{({cos(60)},{-sin(60)})}
\vertex[blue]{6}[(\h0,0)]{(0,0)}
\draw[blue] (v5)--(v6) (v2)--(v1)--(v6);
\draw[red] (v3)--(v4);
\node[below right=0.2cm and 0cm of v3] {$\textcolor{red}{\ttrip_1}$};
\node[above left=0.2cm and 0.1cm of v6] {$\textcolor{blue}{\ttrip_m}$};
\node[align=center, text width=6cm] at ({(2*cos(60)+1)/2+\h0},2) {${\forest_D\in D_1,\quad\forest_D=\rotatejoin_1(\forest_C)=}$\\$\left\langle\textcolor{red}{\Big(3,(2),2\Big)},\textcolor{blue}{\Big(5,(1,\textcolor{lightblue}{\textbf{3}}),1\Big)}\right\rangle$};
\tikzmath{\hi=-5;}
\draw[<->, line width=0.1cm] ({\h0+\hi/2+0.5},0) -- ({\h0+\hi/2+1+0.5},0);
\vertex[blue]{1}[({\h0+\hi},0)]{({cos(60)},{sin(60)})}[']
\vertex[red]{2}[({\h0+\hi},0)]{({1+cos(60)},{sin(60)})}[']
\vertex[red]{3}[({\h0+\hi},0)]{({1+2*cos(60)},0)}[']
\vertex[blue]{4}[({\h0+\hi},0)]{({1+cos(60)},{-sin(60)})}[']
\vertex[blue]{5}[({\h0+\hi},0)]{({cos(60)},{-sin(60)})}[']
\vertex[blue]{6}[({\h0+\hi},0)]{(0,0)}[']
\draw[red] (v'2)--(v'3);
\draw[blue] (v'4)--(v'5)--(v'6)--(v'1);
\node[align=center, text width=5cm] at ({(2*cos(60)+1)/2+\h0+\hi},2) {Intermediate Step};
\draw[<->, line width=0.1cm] ({\h0+\hi*3/2+0.5},0) -- ({\h0+\hi*3/2+1+0.5},0);
\vertex[blue]{1}[({\h0+2*\hi},0)]{({cos(60)},{sin(60)})}[2']
\vertex[red]{2}[({\h0+2*\hi},0)]{({1+cos(60)},{sin(60)})}[2']
\vertex[red]{3}[({\h0+2*\hi},0)]{({1+2*cos(60)},0)}[2']
\vertex[lightblue]{4}[({\h0+2*\hi},0)]{({1+cos(60)},{-sin(60)})}[2']
\vertex[lightblue]{5}[({\h0+2*\hi},0)]{({cos(60)},{-sin(60)})}[2']
\vertex[lightblue]{6}[({\h0+2*\hi},0)]{(0,0)}[2']
\draw[red] (v2'2)--(v2'3);
\draw[lightblue] (v2'4)--(v2'5)--(v2'6);
\node[align=center, text width=6cm] at ({(2*cos(60)+1)/2+\h0+2*\hi},2) {${\forest_C\in C_1,\quad \forest_C=\rotatebreak_1(\forest_D)=}$\\$\left\langle\textcolor{red}{\Big(2,(2),2\Big)},\textcolor{lightblue}{\Big(4,(\textbf{3}),2\Big)},\textcolor{blue}{\Big(1,(1),1\Big)}\right\rangle$};
\node[above right=0.2cm and 0.1cm of v2'3] {$\textcolor{red}{\ttrip_1}$};
\node[below left=0.2cm and 0.0cm of v2'6] {$\textcolor{lightblue}{\ttrip_2}$};
\node[above left=-0.3cm and 0.15cm of v2'1] {$\textcolor{blue}{\ttrip_m}$};
\end{tikzpicture}
\label{fig:cycCDinvolution}
\end{figure}
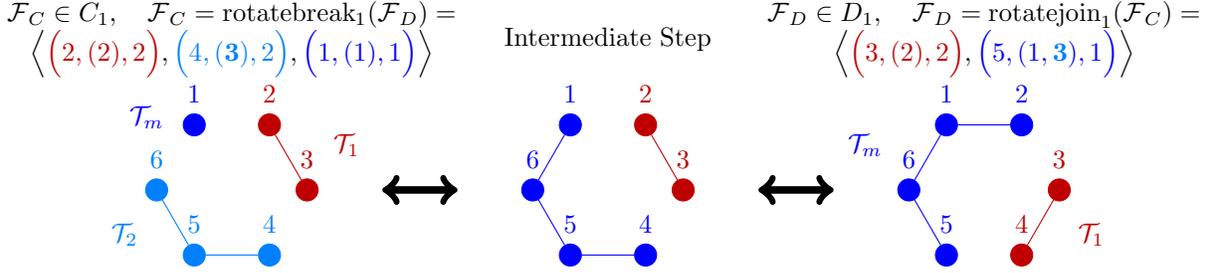
    
    \paragraph{Step 3:}
    The remaining $\forest\in \FT(C_a)$ have the property that $\ttrip_1$ through $\ttrip_{m}$ have $\len(\alpha)=1$, while $\ttrip_1$ through $\ttrip_{m-1}$ have $r\geq 2$. Additionally, edge $(a,1)\in E(T_m)$. We place these $\forest$ in set $E$.

    Notice that all $\forest\in E$ are unit, since every tree triple has $\len(\alpha)=1$. Thus, $\sign(\forest)$ is always positive, meaning the identity function is a first-preserving involution on set $E$. 
    \paragraph{Counting Fixed Points:}
    Combining the involutions on $A\cup B,C\cup D,E$ gives a first-preserving involution $\varphi:\FT(C_a)\rightarrow \FT(C_a)$, where all fixed points are in subset $E$.

    We use Equation~\ref{eq:sro} to find the chromatic symmetric function of a cycle graph. For all $\forest\in E$, we define $\comp(\forest)=\alpha^{(m)}\cdot \alpha^{(1)}\cdots \alpha^{(m-1)}$. Now, for some $\beta\models a$, we want to find the number of $\forest\in E$ where $\comp(\forest)=\beta$. Since edge $(a,1)\in T_m$, there are $|T_m|-1=\beta_1-1$ ways to place $T_m$, and one way to place the remaining $T_1$ through $T_{m-1}$. There are $|T_i|-1$ possible $r$ values for $\ttrip_1$ through $\ttrip_{m-1}$, and $|T_m|$ possible $r$ values for $\ttrip_m$. Thus, there are $\beta_1\cdot(\beta_1-1)\cdots(\beta_l-1)$ possible forest triples where $\comp(\forest)=\beta$.

    If $\comp(\forest)=\beta$, then $\type(\forest)=\sort(\beta)$. Thus, we get
    \[X_{C_a}(\boldsymbol{x})=\sum_{\beta \models a}e_{\sort(\beta)}\cdot \beta_1\cdot (\beta_1-1)\cdots(\beta_l-1),\]
    which is equivalent to Equation~\ref{eqn:cyclecsf}.

    We can also calculate $X^{(i)}_{C_a}(\boldsymbol{x})$ using similar logic, counting the number of forest triples where $\comp(\forest)=\beta$ and $\beta_1=i$ and $r_{\min}=1$, getting that
    \begin{equation}\label{eqn:cyclecsfspec}
        X_{C_a}^{(i)}(\boldsymbol{x})=\sum_{\alpha\models a-i}e_{\sort(\alpha)}\cdot(i-1)\cdot (\alpha_1-1)\cdots(\alpha_l-1).
    \end{equation}
\end{proof}

\section{Adjacent Cycle Chains}\label{sec:cycchain}
In this section, we will prove that adding a cycle to a graph preserves the existence of a first-preserving involution. From this, we get that adjacent cycle chains are $e$-positive. First, we must define what adding a cycle to a graph means.

\begin{definition}
   Let $G_1$ and $G_2$ be two graphs with $k_1$ and $k_2$ labeled vertices respectively. Both graphs have some fixed ordering of edges. Following the notation in \cite{Gebhard2001}, we define
   \[G_1+G_2=([k_1+k_2-1],E=E(G_1)\cup \{(i+k_1-1,j+k_1-1) \mid (i,j)\in E(G_2)\}),\]
    preserving the ordering of the edges in $G_1$ and $G_2$ and making the edges of $G_1$ smaller than the edges of $G_2$.
\end{definition}
\begin{example}
    Figure~\ref{fig:addingcycle} shows a labeled graph $G'$ as well as the new labeled graph $G=C_6+G'$. Since $G'$ is a $K\!$-chain, it has a first-preserving involution, so $G$ should also have a first-preserving involution.
\end{example}
\begin{figure}[!htb]
\centering
\caption{The graph $G'$ next to $C_6+G'$.}
\begin{tikzpicture}
\vertex{1}{(0,0)};
\vertex{2}{({sqrt(2)/2},{sqrt(2)/2})};
\vertex[black][below]{3}{({sqrt(2)/2},{-sqrt(2)/2})};
\vertex{4}{({sqrt(2)},0)};
\vertex{5}{({sqrt(2)+cos(30)},{sin(30)})};
\vertex[black][below]{6}{({sqrt(2)+cos(30)},-{sin(30)})};
\draw (v1)--(v2)--(v3)--(v4)--(v5)--(v6)--(v4)--(v1)--(v3) (v2)--(v4);
\end{tikzpicture}
\hspace{1cm}
\begin{tikzpicture}
\vertex{5}{({-cos(60)},{sin(60)})}
\vertex{4}{({-1-cos(60)},{sin(60)})}
\vertex{3}{({-1-2*cos(60)},0)}
\vertex{2}{({-1-cos(60)},{-sin(60)})}
\vertex{1}{({-cos(60)},{-sin(60)})}
\vertex{6}{(0,0)};
\vertex{7}{({sqrt(2)/2},{sqrt(2)/2})};
\vertex[black][below]{8}{({sqrt(2)/2},{-sqrt(2)/2})};
\vertex{9}{({sqrt(2)},0)};
\vertex{10}{({sqrt(2)+cos(30)},{sin(30)})};
\vertex[black][below]{11}{({sqrt(2)+cos(30)},-{sin(30)})};
\draw (v1)--(v2)--(v3)--(v4)--(v5)--(v6)--(v7)--(v8)--(v9)--(v10)--(v11)--(v9)--(v6)--(v8) (v7)--(v9) (v1)--(v6);
\end{tikzpicture}
\label{fig:addingcycle}
\end{figure}
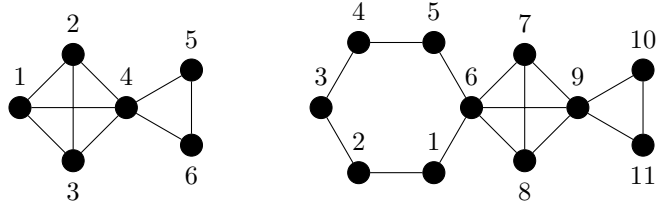
Before jumping to cycle chains, let us first look at cycles connected to trees.
\begin{definition}
    Let $U_{k}$ be a tree with $k$ vertices for some positive integer $k$. For every $\forest\in\FT(C_a+U_{k})$, let $\ttrip=(T,\alpha,r)$ be the tree triple where vertex $a\in V(T)$. We define $\FT'(C_a+U_{k})\subseteq \FT(C_a+U_{k})$ to be the set of all forest triples $\forest$ where $E(U_k)\subseteq E(T)$ and $\alpha_l\geq k$.
\end{definition}
\begin{definition}
    We define $\FT'^{(i)}(C_a+U_k)\subseteq \FT'(C_a+U_k)$ as the subset of forest triples where $\alpha^{(\min)}_1=i$ and $r_{\min}=1$.
\end{definition}
In Section~\ref{sec:cpti}, we will prove the following lemma that the subset $\FT'(C_a+U_k)$ always has an involution.
\begin{lemma}\label{lem:mutatedtadpole}
    The subset $\FT'(C_a+U_k)$ has a first-preserving involution $\varphi$. Additionally,
    \begin{multline}\label{eqn:bak}
        B_{a,k}(\boldsymbol{x}):=\sum_{\mathclap{\substack{\forest\in\FT'(C_a+U_k)}}}\quad\sign(\forest)\cdot e_{\type(\forest)}=\\
        \sum_{\mathclap{\substack{
            \alpha\models a+k,\;
            \len(\alpha)\geq 2\\
            \alpha_1\leq k,\;
            \alpha_2\geq k
        }}}\quad(\alpha_2-\alpha_1+1)(\alpha_1+\alpha_2-k-1)\cdot(\alpha_3-1)\cdots(\alpha_l-1)\cdot e_{\sort(\alpha_1-1,\alpha_2,\ldots,\alpha_l)}.
    \end{multline}
    Moreover, we can break this function into $e$-positive pieces with
    \begin{multline}\label{eqn:specBak}
        B_{a,k}^{(i)}(\boldsymbol{x}):=\sum_{\mathclap{\substack{\forest\in\FT'^{(i)}(C_a+U_k)}}}\quad\sign(\forest)\cdot e_{\type'(\forest)}=\\
        \begin{cases}
            \displaystyle{\smashoperator[r]{\sum_{\substack{\alpha\models a+k-i-1\\\alpha_1\geq k}}}}\quad(k-i)\cdot (\alpha_2-1)\cdots (\alpha_l-1)\cdot e_{\sort(\alpha)}\hspace{0cm},&\text{\normalfont if }i\leq k-1\\[30pt]
            \hspace{2mm}\displaystyle{\smashoperator[r]{\sum_{\substack{\alpha\models a+k-i\\\alpha_1\leq k}}}}\quad(i-k)\cdot(\alpha_2-1)\cdots (\alpha_l-1)\cdot e_{\sort(\alpha_1-1,\alpha\setminus\alpha_1)},&\text{\normalfont if }i\geq k
        \end{cases},
    \end{multline}
    where similar to Equation~\ref{eqn:nicecsftocsf}, we have
    \begin{equation}\label{eqn:nicebftobf}
        B_{a,k}(\boldsymbol{x})=\sum_{i=1}^{a+k-1}e_i\cdot i\cdot B_{a,k}^{(i)}(\boldsymbol{x}).
    \end{equation}
\end{lemma}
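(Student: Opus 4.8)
The plan is to follow closely the construction behind Theorem~\ref{thm:cg}: decompose $\FT'(C_a+U_k)$ into subsets, pair them by explicit sign-reversing, type- and first-preserving bijections, take the identity on what is left, and count the leftover unit forest triples. Set up the labelling so that $1,\dots,a$ carry $C_a$ (with $(1,2)<\cdots<(a-1,a)<(a,1)$) and $U_k$ sits on $a,a+1,\dots,a+k-1$, with all of its edges larger than the cycle edges. For $\forest\in\FT'(C_a+U_k)$ write $\ttrip=(T,\alpha,r)$ for the distinguished tree triple with $a\in V(T)$; by the definition of $\FT'$, the tree $T$ is all of $U_k$ together with a cycle arc ending at $a$, and $\alpha_l\ge k$ says the last part of $\alpha$ contains every vertex of $U_k$. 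As in Section~\ref{sec:cgi}, order the other tree triples $\ttrip_1,\dots,\ttrip_{m-1}$ by increasing minimum vertex, put $\ttrip_m=\ttrip$, and record each as a tuple $(v_i,\alpha^{(i)},r_i)$; the minimal tree triple $\ttrip_{\min}$ holding vertex~$1$ is $\ttrip_1$, unless $T$ wraps around through the edge $(a,1)$, in which case $\ttrip_{\min}=\ttrip_m$, and every bijection below must leave $\alpha^{(\min)}_1$ and $r_{\min}$ fixed in both cases.

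For the involution I would reuse the three-step skeleton of the proof of Theorem~\ref{thm:cg}. A ``$\tjoin$/$\tbreak$'' step acts at the least index $i$ with $\len(\alpha^{(i)})\ge2$ or $r_i=1$, exactly as $\tjoin_i,\tbreak_i$ there; the only change is that when that index is $m$ one splits $\ttrip_m$ so that all of $U_k$ stays in the last piece --- possible precisely because $\alpha_l\ge k$ --- while the cut-off ``$\alpha_l^{(m)}\le a-v_m+1$'' becomes ``$\alpha_l^{(m)}\le a-v_m+k$'' because $\ttrip_m$ now carries $k-1$ extra vertices. A ``$\rotatejoin$/$\rotatebreak$'' step then handles the forest triples missed so far, rotating the cyclic part while $U_k$ stays pinned at $a$ until the distinguished tree triple can be broken without disturbing $\alpha^{(\min)}_1$ and $r_{\min}$; this copies Step~2 of Theorem~\ref{thm:cg} with the arithmetic shifted by $k-1$, and --- since one cannot rotate past the pinned vertex $a$, and a break that would drop $\alpha_l$ below $k$ is forbidden --- a new family of forest triples escapes it. On everything that remains, which will be certain unit forest triples, use the identity. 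One then checks, exactly as in Section~\ref{sec:cgi}, that the combined map $\varphi$ is an involution, preserves $\type$, reverses $\sign$ off its fixed points (these being unit, hence of sign $+1$), and is first-preserving; this amounts to chasing the subset definitions with the constraints $E(U_k)\subseteq E(T)$ and $\alpha_l\ge k$ carried along.

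For the formulas, Equation~\ref{eq:sro} reduces $B_{a,k}(\boldsymbol x)$ to $\sum_\forest e_{\type(\forest)}$ over the fixed points, which I would enumerate by the sizes of their trees and the placement of the distinguished tree. The factors $(\alpha_2-\alpha_1+1)$ and $(\alpha_1+\alpha_2-k-1)$ of Equation~\ref{eqn:bak} are the analogue of the single factor $\beta_1(\beta_1-1)$ of Theorem~\ref{thm:cg}: they record the admissible values of $r$ on the distinguished tree and the ways its arc can be placed, with $\alpha_2\ge k$ forced because that tree contains $U_k$, while $(\alpha_3-1)\cdots(\alpha_l-1)$ counts the values $\ge2$ of $r$ on the other trees, and the shift ``$\alpha\models a+k$'' together with the ``$\alpha_1-1$'' inside $e_{\sort(\cdots)}$ is the bookkeeping for the virtual vertex a break creates. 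Refining by $\alpha^{(\min)}_1=i$ and $r_{\min}=1$ splits the fixed points according to whether the distinguished tree avoids vertex~$1$ --- a short arc disjoint from it when $i\le k-1$, giving the first branch of Equation~\ref{eqn:specBak} with its factor $k-i$ --- or contains vertex~$1$ --- the distinguished tree wrapped through $(a,1)$ when $i\ge k$, whose number of backward extensions from $a$ is exactly $i-k$, giving the second branch. Equation~\ref{eqn:nicebftobf} then follows because a fixed point with $\alpha^{(\min)}_1=i$ may carry any of the $i$ values $r_{\min}\in\{1,\dots,i\}$, exactly as Equation~\ref{eqn:nicecsftocsf} is obtained from Equation~\ref{eq:sro}.

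I expect the hard part to be the ``$\rotatejoin$/$\rotatebreak$'' step and the matching calibration of all the cut-off inequalities. In a bare cycle one rotates freely, but here vertex $a$ is pinned by $U_k$, and the distinguished tree triple has a double role --- carrier of $U_k$ and, when it wraps, the minimal tree triple --- so the rotation must still free up the break of $\ttrip_m$ while keeping $\alpha^{(\min)}_1$ and $r_{\min}$ untouched, and every piece of $\varphi$ has to be checked to land in its intended subset. Getting this right, together with the bookkeeping forced by $\alpha_l\ge k$, is where essentially all the work lies, and it is what makes the two parts $\alpha_1\le k\le\alpha_2$ appear in Equation~\ref{eqn:bak} in place of the single distinguished part of Theorem~\ref{thm:cg}.
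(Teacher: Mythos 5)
Your overall strategy---decompose $\FT'(C_a+U_k)$ into subsets paired by type-preserving, sign-reversing, first-preserving bijections, take the identity on the unit leftovers, and enumerate those---is exactly the paper's strategy, and your fixed-point dichotomy (non-wrapping $U_k$-tree with $k-i$ admissible values of $r'$ when $i\le k-1$; wrapping tree with $i-k$ placements when $i\ge k$) matches the paper's count. The gap is in the involution itself, which is the substance of the lemma. You claim Step 1 works ``exactly as'' in Theorem~\ref{thm:cg}, scanning for the least index $i$ with $\len(\alpha^{(i)})\ge 2$ or $r_i=1$. With your indexing, whenever the $U_k$-tree does not wrap through $(a,1)$ the tree triple containing vertex $1$ is $\ttrip_1$, so your scan targets $\ttrip_{\min}$ itself---indeed $r_{\min}=1$ holds on all of $\FT'^{(i)}(C_a+U_k)$---and a cycle-style join or break there changes exactly the data $\alpha_1^{(\min)},r_{\min}$ you promised to preserve. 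In the paper, $r_{\min}=1$ is never treated as a defect: $\ttrip_{\min}$ and the $U_k$-tree $\ttrip'$ are removed from the scan entirely, and their extra composition parts are handled by separate moves. Your one concrete modification for the distinguished tree (``split $\ttrip_m$ so that all of $U_k$ stays in the last piece,'' with cutoff $\alpha_l^{(m)}\le a-v_m+k$) also does not work with the break convention you are importing: $\tbreak$ removes the \emph{last} part of the composition, so the remainder's new last part is $\alpha_{l-1}$, which may be smaller than $k$, taking you out of $\FT'$; and in the wrapping case you must simultaneously leave $\alpha_1^{(\min)}$ untouched, so neither the first nor the last part is available.

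What is missing are precisely the new moves the paper builds to resolve this tension: $\secondjoin/\secondbreak$, which insert or delete a part in the \emph{second} position of $\alpha^{(\min)}$ so that the first part (first-preservation) and the last part (the $\ge k$ condition) both survive; $\shiftjoin/\shiftbreak$, which append to or break off the end of the vertex-$1$ tree when it does not wrap, shifting the intermediate arcs and leaving $\ttrip'$ untouched; and a $\rotatejoin/\rotatebreak$ joining $\ttrip'$ onto $\ttrip_{\min}$ in which the rotation offset encodes the otherwise-forgotten value $r'$. Your two-move skeleton ``with arithmetic shifted by $k-1$'' collapses these into steps that, as described, are either ill-defined or first-preservation-violating. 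This matters for the formulas too: the fixed-point sets are cut out by the boundary cases of these moves (for instance the condition $r'\ge |T'|+|T_{\min}|-k+1$, which is what produces the factor $k-i$ in Equation~\ref{eqn:specBak}), so the enumeration is not justified until the moves are actually supplied.
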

Proving this lemma will involve constructing a first-preserving involution for the set $\FT'(C_a+U_k)$, in similar manner to Section~\ref{sec:cgi}. Then, we count the fixed points and derive the formulas above. It turns out that the shape of $U_k$ does not matter, as $B_{a,k}(\boldsymbol{x})$ only depends on $a$ and $k$.

Now, we turn to adding a cycle to a graph.
\begin{theorem}\label{thm:c+g}
    If $G'$ is a graph where $\FT(G')$ has a first-preserving involution, then the graph $G=C_a+G'$ also has a first-preserving involution on $\FT(G)$, and the chromatic symmetric function for $G$ can be written as
    \begin{equation}\label{eqn:csfC_a+G'}
        X_{G}(\boldsymbol{x})=\sum_{k=1}^{|G'|}X^{(k)}_{G'}(\boldsymbol{x})\cdot B_{a,k}(\boldsymbol{x}).
    \end{equation}
    The chromatic symmetric function can be broken into pieces
    \begin{equation}\label{eqn:csfC_a+G'spec}
        X_G^{(i)}(\boldsymbol{x})=\sum_{k=1}^{|G'|}X^{(k)}_{G'}(\boldsymbol{x})\cdot B_{a,k}^{(i)}(\boldsymbol{x}).
    \end{equation}
\end{theorem}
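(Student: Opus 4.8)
The plan is to build the first-preserving involution on $\FT(G)$ directly out of the involution $\varphi'$ on $\FT(G')$ and the involution $\varphi$ on $\FT'(C_a+U_k)$ provided by Lemma~\ref{lem:mutatedtadpole}, using the labeling where the vertices of the attached $C_a$ come first (so the shared vertex is vertex $a$, and the edge $(a,1)$ is the last edge of the cycle, smaller than every edge of $G'$). First I would set up a bijection between $\FT(G)$ and a disjoint union, over $k$ and over forest triples $\forest'\in\FT^{(k)}(G')$, of copies of $\FT'^{(i)}(C_a+U_k)$ for varying $i$. The key observation is that the non-broken circuits of $G=C_a+G'$ split along the cut vertex: each non-broken circuit $F$ of $G$ restricts to a non-broken circuit of $G'$ on the $G'$-side and to a forest on the cycle side, and the tree $T$ containing the cut vertex $a$ either stays entirely on one side or spans the cut. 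The condition ``$E(U_k)\subseteq E(T)$ and $\alpha_l\geq k$'' in the definition of $\FT'(C_a+U_k)$ is exactly engineered so that, when $U_k$ is the portion of $\forest'$ glued onto the cut vertex, the data of $\forest$ is equivalent to a pair $(\forest', \forest'')$ with $\forest''\in\FT'(C_a+U_k)$, where $k=\alpha_1^{(\min)\prime}$ is the size of the first part of the cut-containing tree triple of $\forest'$ and the ``$r=1$'' condition lines up with $\FT^{(k)}(G')$.

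Next I would verify that this correspondence is type- and sign-compatible in the right way: $\type(\forest)$ is $\type'(\forest')$ concatenated with $\type(\forest'')$ (after accounting for the shared block of size $k$), and $\sign(\forest)=\sign(\forest')\cdot\sign(\forest'')$, because the length-minus-one exponents add across the cut. Then the involution on $\FT(G)$ is: apply $\varphi'$ to the $G'$-component whenever $\forest'$ is not fixed by $\varphi'$; if $\forest'$ is fixed (hence unit, with $k$ determined), apply the involution $\varphi$ from Lemma~\ref{lem:mutatedtadpole} to the $C_a+U_k$ component. Checking the four axioms of a sign-reversing involution is then routine, and checking that it is \emph{first-preserving} uses that both $\varphi'$ and $\varphi$ are first-preserving: if the smallest vertex of $G$ lies on the cycle side then it is governed by $\varphi$'s first-preserving property (and $\varphi'$ doesn't touch the minimal tree triple), and the delicate case is when the smallest vertex is the cut vertex or on the $G'$-side, which only happens when the cycle contributes nothing new minimal — here I would argue that $\alpha_1^{(\min)}$ and $r_{\min}$ are read off from the $G'$-component, preserved by $\varphi'$, and untouched by $\varphi$.

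Finally, the two displayed formulas follow by summing Equation~\ref{eq:sro} (respectively its $\FT^{(i)}$ refinement, Equation~\ref{eqn:nicecsfinv}) over the fixed points, which factor as pairs of fixed points of $\varphi'$ and $\varphi$; this gives $X_G = \sum_k \big(\sum_{\forest'\text{ fixed in }\FT^{(k)}(G')} e_{\type'(\forest')}\big)\cdot B_{a,k} = \sum_k X_{G'}^{(k)}\cdot B_{a,k}$, and likewise for Equation~\ref{eqn:csfC_a+G'spec}, using that $B_{a,k}^{(i)}$ collects exactly the fixed points of $\varphi$ in $\FT'^{(i)}(C_a+U_k)$. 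The main obstacle I expect is the bookkeeping at the cut vertex: making precise which forest triples of $G$ decompose as such pairs, confirming that the constraint $\alpha_l\ge k$ (i.e.\ the cut-containing tree triple of $\forest''$ has its last part at least as large as the glued block, so that the two ``$r$'' choices genuinely come from disjoint ranges) neither over- nor under-counts, and handling the boundary case where the cut-containing tree of $\forest'$ is all of $G'$ or where $T$ on the cycle side degenerates. Once that dictionary is nailed down, the involution and the formulas drop out.
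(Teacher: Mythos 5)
Your proposal follows essentially the same route as the paper: restrict a forest triple of $G=C_a+G'$ along the cut vertex, apply $\varphi'$ to the $G'$-part when it is not fixed, apply the Lemma~\ref{lem:mutatedtadpole} involution to the cycle-plus-attached-tree part when the $G'$-part is fixed (hence unit, so the constraint $\alpha_l\geq k$ of $\FT'(C_a+U_k)$ is automatic), and then factor the sum over fixed points to obtain Equations~\ref{eqn:csfC_a+G'spec} and~\ref{eqn:csfC_a+G'}. The cut-vertex bookkeeping you flag (your global ``disjoint union of copies of $\FT'^{(i)}(C_a+U_k)$'' really only applies verbatim over fixed unit $\forest'$; for the non-fixed case one only needs the restriction/recombination of the $G'$-component, with the cycle side an ``almost forest triple'') is exactly what the paper handles with its $\forest\vert_{C_a}$, $\forest\vert_{G'}$, $\first$, and $\last$ constructions, so no genuinely different idea is involved.
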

Before proving this theorem, we must introduce a few definitions.

Let $\forest=\left\{\ttrip_1,\ldots,\ttrip_i,\ldots,\ttrip_m\right\}\in\FT(G)$, with tree triples ordered based on their largest vertex, so if $1\leq p<q\leq m$, then $\max(V(T_p))<\max(V(T_q))$. Let $\ttrip_i$ be the tree triple with vertex $a$. Note that tree triples $\ttrip_1$ through $\ttrip_{i-1}$ only contain vertices of the cycle (vertices 1 through $a$), while $\ttrip_{i+1}$ through $\ttrip_m$ only contain vertices of graph $G'$.

The idea behind the new involution will be to ``restrict" a forest triple to $G'$, apply the first-preserving involution on $G'$ to that part, and then recombine the parts. Thus, we want to define how to restrict a forest triple $\forest$ to a specific graph. To do this, we first define how to restrict a tree to a graph.
\begin{definition}
For subtree $T$ of graph $G$, we define \emph{$T \vert_{G'}$}, or \emph{$T$ restricted to $G'$}, as the induced subgraph of $T$ restricted to vertices in $G'$. Similarly, we define \emph{$T\vert_{C_a}$} as the induced subgraph of $T$ restricted to vertices in $C_a$.
\end{definition}
\begin{example}
    The blue tree $T_2$ in Figure~\ref{fig:restrictingtog} on the left contains vertices in both the cycle and the main graph. Then, $T_2\vert_{G'}$ is denoted as the portion of the tree contained in $G'$, which is the light blue tree in the right hand side of the diagram. Similarly, $T_2\vert_{C_a}$ is the dark blue tree in the right hand side of the diagram.
\end{example}
\begin{definition}
    Let $T_1$ be a subtree of $C_a$ and $T_2$ be a subtree of $G'$, where vertex $a\in V(T_1)$ and $a\in V(T_2)$. Then, we define $T_1+T_2=T'$ to be a new tree with vertices $V(T_1)\cup V(T_2)$ and edges $E(T_1)\cup E(T_2)$. Note that $T\vert_{C_a} + T\vert_{G'}=T$.
\end{definition}
\begin{figure}[!htb]
\centering
\caption{A forest triple in $\FT(G)$ being broken into two parts.}
     \begin{tikzpicture}
\vertex[blue]{5}{({-cos(60)},{sin(60)})}
\vertex[blue]{4}{({-1-cos(60)},{sin(60)})}
\vertex[red]{3}{({-1-2*cos(60)},0)}
\vertex[red]{2}{({-1-cos(60)},{-sin(60)})}
\vertex[blue]{1}{({-cos(60)},{-sin(60)})}
\vertex[blue]{6}{(0,0)};
\vertex[darkgreen]{7}{({sqrt(2)/2},{sqrt(2)/2})};
\vertex[darkgreen][below]{8}{({sqrt(2)/2},{-sqrt(2)/2})};
\vertex[blue]{9}{({sqrt(2)},0)};
\vertex[blue]{10}{({sqrt(2)+cos(30)},{sin(30)})};
\vertex[purple][below]{11}{({sqrt(2)+cos(30)},-{sin(30)})};
\draw[blue] (v1)--(v6)--(v5)--(v4) (v6)--(v9)--(v10);
\draw[red] (v2)--(v3);
\draw[darkgreen] (v7)--(v8);
\node[text width=5cm, align=center] at ({(-1-2*cos(60)+sqrt(2)+cos(30))/2},2.1) {$\forest=\Big\langle\textcolor{red}{\Big(T_1,(2),1\Big)},\textcolor{blue}{\Big(T_2,(5,1),5\Big)},$\\$\textcolor{darkgreen}{\Big(T_3,(1,1),1\Big)},\textcolor{purple}{\Big(T_4,(1),1\Big)}\Big\rangle$};
\draw[->, line width=0.1cm] (3,0) -- (3.75,0);
\end{tikzpicture}
\begin{tikzpicture}
\vertex[blue]{5}{({-cos(60)},{sin(60)})}
\vertex[blue]{4}{({-1-cos(60)},{sin(60)})}
\vertex[red]{3}{({-1-2*cos(60)},0)}
\vertex[red]{2}{({-1-cos(60)},{-sin(60)})}
\vertex[blue]{1}{({-cos(60)},{-sin(60)})}
\vertex[blue]{6}{(0,0)};
\draw[line width=0.1cm] (0.5,0)--(1,0) (0.75,-0.25)--(0.75,0.25);
\vertex[lightblue]{6}{(1.5,0)}['];
\vertex[darkgreen]{7}{({1.5+sqrt(2)/2},{sqrt(2)/2})};
\vertex[darkgreen][below]{8}{({1.5+sqrt(2)/2},{-sqrt(2)/2})};
\vertex[lightblue]{9}{({1.5+sqrt(2)},0)};
\vertex[lightblue]{10}{({1.5+sqrt(2)+cos(30)},{sin(30)})};
\vertex[purple][below]{11}{({1.5+sqrt(2)+cos(30)},-{sin(30)})};
\draw[blue] (v1)--(v6)--(v5)--(v4);
\draw[lightblue] (v'6)--(v9)--(v10);
\draw[red] (v2)--(v3);
\draw[darkgreen] (v7)--(v8);
\node[text width=3cm, align=center] at ({(-1-2*cos(60)+0)/2},2.3) {${\forest\vert_{C_a}=}$\\$\Big\langle\textcolor{red}{\Big(T_1,(2),1\Big)},$\\$\textcolor{blue}{\Big(T_2|_{C_a},(4),5\Big)\Big\rangle}$};
\node[text width=5cm, align=center] at ({(1.5+1.5+sqrt(2)+cos(30))/2},2.6) {${\forest\vert_{G'}=}$\\$\Big\langle\textcolor{lightblue}{\Big(T_2|_{G'},(2,1),1\Big)},$\\$\textcolor{darkgreen}{\Big(T_3,(1,1),1\Big)},$\\$\textcolor{purple}{\Big(T_4,(1),1\Big)}\Big\rangle$};
\end{tikzpicture}
\label{fig:restrictingtog}
\end{figure}
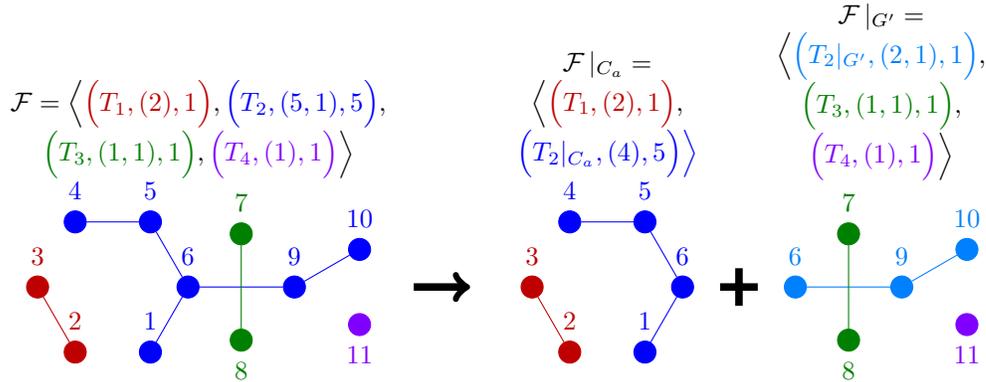
We must also define how to break up a composition.
\begin{definition}
    Let $\alpha=\left(\alpha_1,\ldots,\alpha_m\right)\models n$ be a composition and let $1\leq j\leq n$ be an integer. We define \emph{$\last(\alpha,j)$} as the unique composition of $j$ of the form $(k,\alpha_{i+1},\ldots,\alpha_m)$,
    such that $1\leq k\leq \alpha_i$. We can also define \emph{$\first(\alpha,j)$} as the unique composition of $j$ of the form $(\alpha_1,\ldots,\alpha_{i-1},k)$,
    such that $1\leq k\leq \alpha_i$.
\end{definition}
\begin{example}
    Let $\alpha=(3,2,8,4)\models 17$, then $\last(\alpha,4)=(4)$ and $\last(\alpha,11)=(7,4)$. Similarly, $\first(\alpha,5)=(3,2)$ and $\first(\alpha,15)=(3,2,8,2)$.
\end{example}
\begin{definition}
    Let $\forest=\{\ttrip_1,\ldots,\ttrip_i,\ldots,\ttrip_m\}\in\FT(G)$, with the tree triples ordered as previously stated and with vertex $a\in V(T_i)$. We now define \emph{$\forest\vert _{G'}$}, or \emph{$\forest$ restricted to $G'$}, as
    \begin{equation*}
        \forest\vert_{G'}=\left\{(T_i\vert_{G'},\last\!\left(\alpha^{(i)},\bigl|\,T_i\vert_{G'}\,\bigr|\right), 1)\right\} \cup \left\{\ttrip_{i+1},\ldots,\ttrip_m\right\}.
    \end{equation*}
    We also define $\forest\vert_{C_a}$ as
    \begin{equation*}
        \forest\vert_{C_a}=\left\{\ttrip_1\ldots,\ttrip_{i-1}\right\}\cup \left\{(T_i\vert_{C_a},\first\!\left(\alpha^{(i)},\bigl|\,T_i\vert_{C_a}\,\bigr|\right), r_i)\right\}.
    \end{equation*}
\end{definition}
\begin{remark}
    Note that $\forest\vert_{G'}$ is a forest triple in $\FT(G')$. Similarly, $\forest\vert_{C_a}$ is almost always a forest triple of $C_a$. However, it is possible for $r_i> \alpha_1^{(i)}$ in $\forest\vert_{C_a}$. This does not matter, since when we reattach $\varphi(\forest\vert_{G'})$ to $\forest\vert_{C_a}$, we will still end up with a $\forest\in\FT(G)$. This will be elaborated on in the following paragraphs.
\end{remark}
\begin{definition}
    We define an \emph{almost forest triple} $\forest$ of cycle $C_a$ as an object $\left\{\ttrip_1,\ldots,\ttrip_m\right\}$ (with the largest vertex in $V(T_m)$) with all the properties of a forest triple, except that if $\len(\alpha^{(m)})=1$ then it is possible for $r_m> \alpha_1^{(m)}$. All forest triples of $C_a$ are also almost forest triples.
\end{definition}
\begin{example}
    Looking at Figure~\ref{fig:restrictingtog}, we can break the forest triple $\forest$ into two separate objects, $\forest\vert_{C_a}$ and $\forest\vert_{G'}$. Note that $\forest\vert_{C_a}$ is not actually a forest triple because $r_2>\alpha_1^{(2)}$, meaning it is an almost forest triple. Meanwhile, $\forest\vert_{G'}$ is always a forest triple of graph $G'$.
\end{example}
We now define how to combine an almost forest triple of $C_a$ and a forest triple of $G'$.

\begin{definition}
    Let $\forest_1=\left\{\ttrip_1,\ldots,\ttrip_i\right\}$ be an almost forest triple of $C_a$, where vertex $a\in T_i$. Let $\forest_2=\left\{\ttrip_{i+1},\ldots,\ttrip_m\right\}$ be a forest triple of $G'$, with $\ttrip_{\min}=\ttrip_{i+1}$ and $r_{i+1}=1$. Then, if $r_i\leq \alpha_1^{(i)}$, or both $\len(\alpha_1^{(i)})=1$ and $r_i\leq \alpha_1^{(i)}+\alpha_1^{(i+1)}$, we define $\forest_1+\forest_2$ as
    \begin{equation*}
        \left\{\ttrip_1,\ldots,\ttrip_{i-1}\right\}\cup \left\{\left(T_i+T_{i+1},(\alpha_1^{(i)},\ldots,\alpha_l^{(i)}+\alpha_1^{(i+1)},\ldots,\alpha_l^{(i+1)}),r_i\right)\right\}\cup\left\{\ttrip_{i+2},\ldots,\ttrip_m\right\}.
    \end{equation*}
\end{definition}
\begin{remark}
    Note that for any forest triple $\forest$ of $G$, then $\forest\vert_{C_a} + \forest\vert_{G'}=\forest$.
\end{remark}
We can now prove Theorem~\ref{thm:c+g}.
\begin{proof}[Proof of Theorem~\ref{thm:c+g}]
Let $\varphi':\FT(G')\rightarrow\FT(G')$ be a first-preserving involution on graph $G'$. We want to find a first-preserving involution $\varphi:\FT(G)\rightarrow\FT(G)$. For a forest triple $\forest$, we denote tree triples $\ttrip_1$ through $\ttrip_m$ ordered by largest vertex, with vertex $a\in V(T_i)$.

We define subsets
\begin{equation*}
    A=\{\forest\in \FT(G):\forest\vert_{G'} \text{ is not fixed under } \varphi'\},
\end{equation*}
and $B=\FT(G)\setminus A$. Now, say that $\forest\in A$. Then $\varphi(\forest):A\rightarrow A$ is defined as
\begin{equation*}
    \varphi(\forest)=(\forest\vert_{C_a}) + \varphi'(\forest \vert_{G'}).
\end{equation*}
Since $\varphi'$ is a first-preserving involution, it preserves $\alpha_1^{(i+1)}$, meaning that $\forest\vert_{C_a}+\varphi'(\forest\vert_{G'})$ is defined. Notice that
\begin{equation*}\varphi(\varphi(\forest))=(\forest\vert_{C_a})+\varphi'(\varphi'(\forest\vert_{G'}))=(\forest\vert_{C_a})+(\forest\vert_{G'})=\forest,\end{equation*} meaning $\varphi$ is an involution. It preserves $\type(\forest)$ and reverses $\sign(\forest)$, and thus is a sign-reversing involution. Additionally, it preserves $\alpha_1$ and $r$ for $\ttrip_1$ through $\ttrip_{i-1}$, and so it must be a first-preserving involution. Note $\varphi:A\rightarrow A$ has no fixed points.

The remaining forest triples $\forest\in B$ have $\forest\vert_{G'}$ fixed under $\varphi'$. We pick some $\forest'\in\FT(G')$ where $\forest'$ is fixed under $\varphi'$ and $r'_{\min}=1$. We look at forest triples where $\forest\vert_{G'}=\forest'$, and denote this subset $B_{\forest'}$.

Let $\first(\forest)=\{\ttrip_1,\ldots,\ttrip_i\}$ and $\last(\forest)=\{\ttrip_{i+1},\ldots,\ttrip_l\}$. Note that $\first(\forest)\in \FT'(C_a+T_i\vert_{G'})=\FT'(C_a+T'_{\min})$, where $\ttrip'_{\min}\in \forest'$. By Lemma~\ref{lem:mutatedtadpole}, $\FT'(C_a+T'_{\min})$ has a first-preserving involution, which we will call $\varphi_{\forest'}$. We can define a first-preserving involution $\varphi:B_{\forest'}\rightarrow B_{\forest'}$, where 
\begin{equation*}
    \varphi(\forest\in B_{\forest})=\varphi_{\forest'}(\first(\forest))\cup \last(\forest).
\end{equation*}
Then, we have
\begin{equation*}
    \varphi(\varphi(\forest))=\varphi_{\forest'}(\varphi_{\forest'}(\first(\forest)))\cup \last(\forest)=\first(\forest)\cup \last(\forest)=\forest,
\end{equation*}
so $\varphi$ is an involution. Since $\varphi_{\forest'}$ is a first-preserving involution, we see $\varphi$ preserves $\type(\forest)$ and $\alpha_1^{(\min)}$ and $r_{\min}$. Additionally, $\varphi$ reverses $\sign(\forest)$, unless $\forest$ is a fixed point. These fixed points in $B_{\forest'}$ under $\varphi$ have $\first(\forest)$ fixed under $\varphi_{\forest'}$, meaning fixed points are unit. Thus, $\varphi:B_{\forest'}\rightarrow B_{\forest'}$ is a first-preserving involution.

Note that for $\forest\in B_{\forest'}$, then 
\begin{equation*}
    \type(\last(\forest))=\type'(\forest'),\; e_{\type'(\forest)}=e_{\type'(\first(\forest))}\cdot e_{\type(\last(\forest))}, \; \sign(\forest)=\sign(\first(\forest)).
\end{equation*}

We can now combine the first-preserving involutions on $A$ and $B$, to make a new first-preserving involution on the set $\FT(G)$, and we can then calculate the value of $X_G^{(i)}(\boldsymbol{x})$ as
{\allowdisplaybreaks
\begin{align*}
    X^{(i)}_G(\boldsymbol{x})&=\sum_{\substack{\forest\in\FT^{(i)}(G)\\\text{$\forest$ fixed under $\varphi$}}}\sign(\forest)\cdot e_{\type'(\forest)}\\
    &=\sum_{\substack{\forest'\in\FT(G')\\r_{\min}=1\\\text{$\forest'$ fixed under $\varphi'$}}}\sum_{\forest\in (B_{\forest'})\cap(\FT^{(i)}(G))}\sign(\forest)\cdot e_{\type'(\forest)}\\
    &=\sum_{k=1}^{|G'|}\sum_{\substack{\forest'\in\FT^{(k)}(G')\\\text{$\forest'$ fixed under $\varphi'$}}}\sum_{\substack{\forest\in \FT^{(i)}(G)\\\forest\vert_{G'}=\forest'}}\sign(\forest)\cdot e_{\type'(\forest)}\\
    &=\sum_{k=1}^{|G'|}\sum_{\substack{\forest'\in\FT^{(k)}(G')\\\text{$\forest'$ fixed under $\varphi$}}}\Big(e_{\type'(\forest')}\cdot\sum_{\substack{\forest\in\FT^{(i)}(G)\\\forest\vert_{G'}=\forest'}}\sign(\first(\forest))\cdot e_{\type'(\first(\forest))}\Big)\\
    &=\sum_{k=1}^{|G'|}\sum_{\substack{\forest'\in\FT^{(k)}(G')\\\text{$\forest'$ fixed under $\varphi$}}}\Big(e_{\type'(\forest')}\cdot \sum_{\substack{\forest\in\FT'^{(i)}(C_a+T'_{\min})}}\sign(\forest)\cdot e_{\type'(\forest)}\Big)\\&=\sum_{k=1}^{|G'|}\sum_{\substack{\forest'\in\FT^{(k)}(G')\\\text{$\forest'$ fixed under $\varphi$}}}\Big(e_{\type'(\forest')}\cdot B_{a,k}^{(i)}(\boldsymbol{x})\Big)=\sum_{k=1}^{|G'|}X_{G'}^{(k)}(\boldsymbol{x})\cdot B_{a,k}^{(i)}(\boldsymbol{x}),
\end{align*}
}
which is Equation~\ref{eqn:csfC_a+G'spec}. Then, using this equation with Equation~\ref{eqn:nicecsftocsf} and Equation~\ref{eqn:nicebftobf}, we can easily derive Equation~\ref{eqn:csfC_a+G'}.
\end{proof}

We can now prove the $e$-positivity of adjacent cycle chains.
\begin{definition}
    An \emph{adjacent cycle chain} is a graph of the form $C_{a_1}+\cdots+C_{a_m}$, where $m\geq 1$ and $a_i\geq 2$ for $1\leq i\leq m$. The labeling of the vertices and ordering of the edges of the cycles are defined in Section~\ref{sec:cgi}.
\end{definition}
\begin{example}
Figure~\ref{fig:fakecyclechains} shows the adjacent cycle chain $C_2+C_4+C_3$ on the left, next to a graph that is not an adjacent cycle chain on the right. Note that the cut vertices of adjacent cycle chains are adjacent to each other, while the graph on the right has non-adjacent cut vertices.
\end{example}
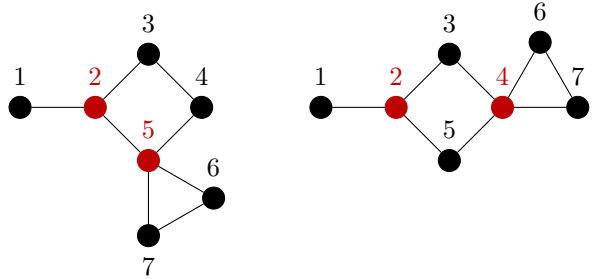
\begin{figure}[!htb]
\centering
\caption{Two graphs with red cut vertices, on the left an adjacent cycle chain, on the right a non-adjacent cycle chain.}
\label{fig:fakecyclechains}
\begin{tikzpicture}
\tikzmath{\o1=4;}
\vertex{1}[(\o1,0)]{(0,0)}['];
\vertex[red]{2}[(\o1,0)]{(1,0)}['];
\vertex{3}[(\o1,0)]{({1+sqrt(2)/2},{sqrt(2)/2})}['];
\vertex[red]{4}[(\o1,0)]{({1+sqrt(2)},{0})}['];
\vertex{5}[(\o1,0)]{({1+sqrt(2)/2},{-sqrt(2)/2})}['];
\vertex{7}[(\o1,0)]{({2+sqrt(2)},{0})}['];
\vertex{6}[(\o1,0)]{({1.5+sqrt(2)},{sqrt(3)/2})}['];
\draw (v'1)--(v'2)--(v'3)--(v'4)--(v'5)--(v'2) (v'4)--(v'6)--(v'7)--(v'4);
\vertex{1}{(0,0)};
\vertex[red]{2}{(1,0)};
\vertex{3}{({1+sqrt(2)/2},{sqrt(2)/2})};
\vertex{4}{({1+sqrt(2)},{0})};
\vertex[red]{5}{({1+sqrt(2)/2},{-sqrt(2)/2})};
\vertex[black][below]{7}{({1+sqrt(2)/2},{-1-sqrt(2)/2})};
\vertex{6}{({1+sqrt(3)/2+sqrt(2)/2},{-0.5-sqrt(2)/2})};
\draw (v1)--(v2)--(v3)--(v4)--(v5)--(v2) (v5)--(v6)--(v7)--(v5);
\end{tikzpicture}
\end{figure}

\begin{corollary}
    Adjacent cycle chains are $e$-positive.
\end{corollary}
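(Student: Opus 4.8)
The plan is to induct on the number $m$ of cycles, carrying along a hypothesis strong enough to feed back into Theorem~\ref{thm:c+g}: I would prove that for every adjacent cycle chain $G = C_{a_1} + \cdots + C_{a_m}$, the set $\FT(G)$ admits a first-preserving involution — equivalently, by the remark following Equation~\ref{eqn:nicecsfinv}, that $X_G^{(i)}(\boldsymbol{x})$ is $e$-positive for every $i$. Once this is established, Equation~\ref{eqn:nicecsftocsf} immediately upgrades it to the $e$-positivity of $X_G(\boldsymbol{x})$ itself, since multiplying an $e$-positive symmetric function by $i\cdot e_i$ keeps it $e$-positive.

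For the base case $m = 1$ the chain is a single cycle $C_{a_1}$, and Theorem~\ref{thm:cg} — or rather the involution constructed in its proof — gives a first-preserving involution on $\FT(C_{a_1})$, while Equation~\ref{eqn:cyclecsfspec} even writes $X_{C_{a_1}}^{(i)}(\boldsymbol{x})$ as a manifestly non-negative sum of terms $e_{\sort(\alpha)}$. For the inductive step I would write $G = C_{a_1} + G'$ with $G' = C_{a_2} + \cdots + C_{a_m}$ (using associativity of the $+$ operation), apply the induction hypothesis to obtain a first-preserving involution on $\FT(G')$ together with the $e$-positivity of each $X_{G'}^{(k)}(\boldsymbol{x})$, and then invoke Theorem~\ref{thm:c+g}. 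That theorem hands back both a first-preserving involution on $\FT(G)$ and the identity $X_G^{(i)}(\boldsymbol{x}) = \sum_{k=1}^{|G'|} X_{G'}^{(k)}(\boldsymbol{x}) \cdot B_{a_1,k}^{(i)}(\boldsymbol{x})$.

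To finish the inductive step I would note that Lemma~\ref{lem:mutatedtadpole} makes each $B_{a_1,k}^{(i)}(\boldsymbol{x})$ $e$-positive, and that the product of two $e$-positive symmetric functions is $e$-positive because $e_\lambda e_\mu = e_{\sort(\lambda\cdot\mu)}$; hence every term of the sum, and therefore $X_G^{(i)}(\boldsymbol{x})$, is $e$-positive, closing the induction. Applying Equation~\ref{eqn:nicecsftocsf} one last time then yields that $X_G(\boldsymbol{x})$ is $e$-positive.

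Honestly, the hard part is not in this corollary at all — it is entirely packaged into Theorem~\ref{thm:c+g} and Lemma~\ref{lem:mutatedtadpole}. The one place that needs a little care is the choice of inductive hypothesis: one must propagate the existence of a first-preserving involution (equivalently, $e$-positivity of every $X_G^{(i)}$) rather than the bare $e$-positivity of $X_G$, since Theorem~\ref{thm:c+g} consumes the former as input and it is not clear the latter alone would suffice to restart the induction.
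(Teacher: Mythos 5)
Your proposal is correct and follows essentially the same route as the paper: induct on the number of cycles, use the first-preserving involution on $C_a$ from Theorem~\ref{thm:cg} as the base case, and apply Theorem~\ref{thm:c+g} to attach each new cycle, propagating the existence of a first-preserving involution (not merely bare $e$-positivity) as the inductive hypothesis. Your additional remarks on why this yields $e$-positivity via Equations~\ref{eqn:nicecsfinv} and~\ref{eqn:nicecsftocsf} are consistent with the paper's framework and add no divergence in approach.
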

\begin{proof}
We use induction: if the adjacent cycle chain is just $C_a$, then it has a first-preserving involution by Theorem~\ref{thm:cg}. Now, assume that all adjacent cycle chains with $m$ cycles have a first-preserving involution. Then an adjacent cycle chain with $m+1$ cycles can be written as $C_a+G'$, where $G'$ is an adjacent cycle chain with $m$ cycles and thus has a first-preserving involution. By Theorem~\ref{thm:c+g}, $C_a+G'$ also has a first-preserving involution, which means by induction all finite adjacent cycle chains have first-preserving involutions and are thus $e$-positive.
\end{proof}
This result provides an alternate proof for the $e$-positivity of tadpoles ($P_a+C_b$), hats ($P_a+C_b+P_c$), and dumbbells ($C_a+P_b+C_c$), which have been proven by previous papers \cite{mitrovic2024epositivitynewclasses,wang2021cyclechordgraphsepositive,wang2024compositionmethodchromaticsymmetric}. We can extend this idea to adjacent cycle+clique chains.
\begin{definition}
    An \emph{adjacent cycle+clique chain} is a graph of the form $G_1+\cdots + G_m$, where $m\geq 1$ and $G_i$ is either a clique or a cycle.
\end{definition}
\begin{corollary}
    Adjacent cycle+clique chains are $e$-positive.
\end{corollary}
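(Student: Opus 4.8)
The plan is to proceed exactly as in the cycle-chain case by induction on the number of factors $G_i$, where the inductive engine is again Theorem~\ref{thm:c+g} together with the analogous clique-attaching result. The base case $m=1$ is immediate: a single cycle $C_a$ has a first-preserving involution by Theorem~\ref{thm:cg}, and a single clique $K_a$ has one by \cite[Theorem~4.10]{tom2024signed} (the $K\!$-chain result specializes to a single clique). For the inductive step, write an adjacent cycle+clique chain $G$ with $m+1$ factors as $G = G_1 + G'$, where $G' = G_2 + \cdots + G_{m+1}$ is an adjacent cycle+clique chain with $m$ factors. By the inductive hypothesis $\FT(G')$ has a first-preserving involution. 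There are two cases for the leftmost factor $G_1$.

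If $G_1 = C_a$ is a cycle, then $G = C_a + G'$ and Theorem~\ref{thm:c+g} applies directly: since $G'$ has a first-preserving involution, so does $G = C_a + G'$. If instead $G_1 = K_a$ is a clique, then $G = K_a + G'$, and I would invoke the analogue of Theorem~\ref{thm:c+g} for cliques. That analogue states that attaching a clique $K_a$ at a single vertex to a graph $G'$ with a first-preserving involution yields a graph $K_a + G'$ with a first-preserving involution. Its proof is structurally identical to that of Theorem~\ref{thm:c+g}: one restricts a forest triple of $K_a + G'$ to $G'$, applies $\varphi'$, and recombines, with the only change being that the role played by Lemma~\ref{lem:mutatedtadpole} (the involution on $\FT'(C_a + U_k)$) is instead played by the corresponding statement for $\FT'(K_a + U_k)$, which is precisely the content extracted from the $K\!$-chain analysis of \cite{tom2024signed}. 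In either case, $G$ has a first-preserving involution, hence by Equation~\ref{eq:sro} (applied through Equation~\ref{eqn:nicecsfinv} and Equation~\ref{eqn:nicecsftocsf}) $X_G(\boldsymbol{x})$ is a non-negative sum of $e_{\type(\forest)}$'s over fixed points, so $G$ is $e$-positive. Induction then gives the result for all $m$.

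The main obstacle is making precise and citing (or proving) the clique version of Theorem~\ref{thm:c+g}, i.e. that $K_a + G'$ preserves the first-preserving involution. Since the paper's machinery in Section~\ref{sec:cycchain} is phrased for cycles, one should check that the restriction/recombination argument goes through verbatim when $C_a$ is replaced by $K_a$: the decomposition of a forest triple of $K_a + G'$ into a part living on $K_a$ (an ``almost forest triple'' of $K_a$) and a genuine forest triple of $G'$ via $\last$ and $\first$ of the relevant composition, the fact that $\varphi'$ preserves $\alpha_1^{(\min)}$ and $r_{\min}$ of the $G'$-part, and the existence of a first-preserving involution on the corresponding subset $\FT'(K_a + U_k)$ of ``clique+tree'' forest triples. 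This last ingredient is exactly what underlies the $e$-positivity of $K\!$-chains in \cite{tom2024signed}, so it can be quoted; one just needs to confirm it is stated in a form (analogous to Lemma~\ref{lem:mutatedtadpole}) giving the block function $B^{K}_{a,k}(\boldsymbol{x})$ and its pieces $B^{K,(i)}_{a,k}(\boldsymbol{x})$, after which Equation~\ref{eqn:csfC_a+G'} and Equation~\ref{eqn:csfC_a+G'spec} have clique analogues and the induction closes with no new ideas.
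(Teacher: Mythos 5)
Your proposal is correct and follows essentially the same route as the paper: induction on the number of factors, peeling off the leftmost factor, using Theorem~\ref{thm:c+g} when it is a cycle and the clique-attachment result when it is a clique. The ``main obstacle'' you flag --- the clique analogue of Theorem~\ref{thm:c+g} --- is exactly what the paper quotes directly from \cite[Theorem~4.10]{tom2024signed} (that $K_a+G'$ inherits a first-preserving involution from $G'$), so no new argument is required there.
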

\begin{proof}
    If a graph $G'$ has a first-preserving involution, then the graph $K_a+G'$ also has a first-preserving involution \cite[Theorem~4.10]{tom2024signed}. For the base case, both $C_a$ and $K_a$ have a first-preserving involution. Now, assume all adjacent cycle+clique chains with $m$ cycles and cliques have a first-preserving involution. Then, an adjacent cycle+clique chain with $m+1$ cycles, $G$, can be written as either $C_a+G'$ or $K_a+G'$, where $G'$ is an adjacent cycle+clique chain with $m$ cycles. Thus, $G$ also has a first-preserving involution, and by induction, all finite adjacent cycle+clique chains have a first-preserving involution.
\end{proof}
\begin{corollary}
    The chromatic symmetric function of $C_a+C_b$ is
    \begin{multline}
        \label{eqn:C_a+C_b}
    X_{C_a+C_b}(\boldsymbol{x})=\sum_{\substack{\alpha\models a,\beta\models b+\alpha_1\\\len(\beta)\geq 2,\beta_1\leq \alpha_1,\beta_2\geq \alpha_1}}(\alpha_1-1)\cdots (\alpha_l-1)\cdot(\beta_2-\beta_1+1)(\beta_1+\beta_2-\alpha_1-1)\cdot\\[-20pt](\beta_3-1)\cdots (\beta_l-1)\cdot e_{\sort(\alpha\setminus \alpha_1\cdot \beta\setminus\beta_1 \cdot (\beta_1-1))}.
    \end{multline}
\end{corollary}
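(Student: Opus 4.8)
The plan is to derive \eqref{eqn:C_a+C_b} by feeding the explicit cycle formulas into Theorem~\ref{thm:c+g} and then performing a change of summation index. Since the chromatic symmetric function is independent of the vertex labeling, and the graphs $C_a+C_b$ and $C_b+C_a$ are isomorphic (both are two cycles sharing a single cut vertex), I would first write $X_{C_a+C_b}(\boldsymbol{x})=X_{C_b+C_a}(\boldsymbol{x})$ and apply Theorem~\ref{thm:c+g} with the added cycle being $C_b$ and $G'=C_a$, obtaining
\begin{equation*}
    X_{C_a+C_b}(\boldsymbol{x})=\sum_{k=1}^{a}X_{C_a}^{(k)}(\boldsymbol{x})\cdot B_{b,k}(\boldsymbol{x}).
\end{equation*}
This swap is what makes the asymmetric roles of $a$ and $b$ in \eqref{eqn:C_a+C_b} land on the correct sides, so it is worth flagging that this is a deliberate choice and not a typo.

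Next I would substitute the closed forms already available. Equation~\ref{eqn:cyclecsfspec} gives $X_{C_a}^{(k)}(\boldsymbol{x})=\sum_{\gamma\models a-k}e_{\sort(\gamma)}\cdot(k-1)\cdot(\gamma_1-1)\cdots(\gamma_{\len(\gamma)}-1)$, and Equation~\ref{eqn:bak} expresses $B_{b,k}(\boldsymbol{x})$ as a sum over $\delta\models b+k$ with $\len(\delta)\geq2$, $\delta_1\leq k$, $\delta_2\geq k$, with coefficient $(\delta_2-\delta_1+1)(\delta_1+\delta_2-k-1)(\delta_3-1)\cdots(\delta_l-1)$ and with $e$-index $\sort(\delta_1-1,\delta_2,\ldots,\delta_l)$. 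Substituting both expands the right-hand side into a single sum over triples $(k,\gamma,\delta)$, and using the multiplicativity $e_\mu e_\nu=e_{\sort(\mu\cdot\nu)}$ the product of the two $e$-terms becomes $e_{\sort(\gamma\cdot(\delta_1-1)\cdot\delta_2\cdots\delta_l)}$.

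Finally I would re-index: a composition $\alpha\models a$ is exactly the data of a first part $k=\alpha_1$ together with the composition $\gamma=\alpha\setminus\alpha_1\models a-k$, and I set $\beta=\delta$. Under this bijection $\delta\models b+k$ becomes $\beta\models b+\alpha_1$, the conditions $\len(\delta)\geq2$, $\delta_1\leq k$, $\delta_2\geq k$ become $\len(\beta)\geq2$, $\beta_1\leq\alpha_1$, $\beta_2\geq\alpha_1$, the coefficient $(k-1)(\gamma_1-1)\cdots$ becomes $(\alpha_1-1)(\alpha_2-1)\cdots(\alpha_l-1)$, the $B$-coefficient becomes $(\beta_2-\beta_1+1)(\beta_1+\beta_2-\alpha_1-1)(\beta_3-1)\cdots(\beta_l-1)$, and the $e$-index becomes $\sort(\alpha\setminus\alpha_1\cdot\beta\setminus\beta_1\cdot(\beta_1-1))$. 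Collecting terms yields precisely \eqref{eqn:C_a+C_b}. I do not expect a genuine obstacle here: the substantive content is already in Theorem~\ref{thm:c+g}, Equation~\ref{eqn:cyclecsfspec}, and Lemma~\ref{lem:mutatedtadpole}, so the only points needing care are invoking the isomorphism $C_a+C_b\cong C_b+C_a$ and verifying that the re-indexing $(k,\gamma)\leftrightarrow\alpha$ is a bijection that translates every inequality constraint faithfully.
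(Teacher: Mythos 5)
Your proposal is correct and follows essentially the same route as the paper: substitute Equation~\ref{eqn:cyclecsfspec} and Equation~\ref{eqn:bak} into the product formula of Theorem~\ref{thm:c+g} and merge the sum over $k$ with the sum over $\gamma\models a-k$ into a single sum over $\alpha\models a$ with $\alpha_1=k$. Your explicit appeal to the isomorphism $C_a+C_b\cong C_b+C_a$ (so that the roles of the two cycles may be swapped before applying the theorem) is a detail the paper leaves implicit, and flagging it is a worthwhile clarification rather than a deviation.
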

\begin{proof}
    From Equation~\ref{eqn:cyclecsfspec} and Equation~\ref{eqn:csfC_a+G'}, we get that
    \begin{equation*}X_{C_a+C_b}(\boldsymbol{x})=\sum_{k=1}^{a}\left(\sum_{\alpha\models a-k}e_{\sort(\alpha)}\cdot (k-1)\cdot (\alpha_1-1)\cdots (\alpha_l-1)\cdot B_{b,k}(\boldsymbol{x})\right).
    \end{equation*}
    We can combine the sum over $k$ and the sum over $\alpha\models a-k$ to become a sum over $\alpha\models a$, where $\alpha_1$ now represents the value of $k$. This gives
    \begin{multline*}
        \sum_{\alpha\models a}e_{\sort(\alpha\setminus \alpha_1)}\cdot (\alpha_1-1)\cdots (\alpha_l-1)\cdot\\[-2mm]\sum_{\substack{\beta\models b+\alpha_1, \len(\beta)\geq 2\\\beta_1\leq \alpha_1,\beta_2\geq \alpha_1}}(\beta_2-\beta_1+1)(\beta_1+\beta_2-\alpha_1-1)\cdot (\beta_3-1)\cdots (\beta_l-1) \cdot e_{\sort(\beta_1-1,\beta_2,\ldots,\beta_l)},
    \end{multline*}
    which simplifies to Equation~\ref{eqn:C_a+C_b}.
\end{proof}
\begin{example}
    The chromatic symmetric function of $C_4+C_3$ is
    \begin{equation*}
        X_{C_4+C_3}\oset[0.31cm]{\alpha,\beta\hspace{0.1cm}=}{(\boldsymbol{x})}=\overbrace{4e_{3,2,1}}^{(2,2),(2,3)}+\overbrace{8e_{4,2}}^{(2,2),(1,4)}+\overbrace{12e_{4,2}}^{(4),(3,4)}+\overbrace{24e_{5,1}}^{(4),(2,5)}+\overbrace{36e_{6}}^{(4),(1,6)}.
    \end{equation*}
\end{example}
\section{Cycle Plus Tree Involution}\label{sec:cpti}
In this section we will prove Lemma~\ref{lem:mutatedtadpole}, which states the subset $\FT'(C_a+U_k)$ has a first-preserving involution and provides an explicit formula for $B_{a,k}(\boldsymbol{x})$. As a recap, we will redefine these terms below.

\begin{definition}
The graph $C_a+U_k$ is a cycle with $a$ vertices connected to a tree with $k$ vertices. For every $\forest\in\FT(C_a+U_k)$, let $\ttrip=(T,\alpha,r)$ where $a\in V(T)$. The set $\FT'(C_a+U_k)$ is the set of forest triples where $E(U_k)\subseteq E(T)$ and $\alpha_l\geq k$.

Then, $\FT'^{(i)}(C_a+U_k)$ is the subset of $\FT'(C_a+U_k)$ where $\alpha_1^{(\min)}=i$ and $r_{\min}=1$.
\end{definition}

We will define a way to denote forest triples similar to the manner described in Section~\ref{sec:cgi}. Say forest triple $\forest\in\FT'(C_a+U_k)$ has edge $(a,1)$. Then, we write $\forest=\{\ttrip_1,\ldots,\ttrip_m, \ttrip_{\min}\}$ as a forest triple with $m+1$ tree triples where $m\geq 0$. We order the tree triples so vertex $1\in V(T_{\min})$ and $\min(V(T_1))<\cdots<\min(V(T_m))$.

If instead edge $(a,1)$ is not in the forest triple, then we write $\forest=\{\ttrip_1,\ldots,\ttrip_m,\ttrip'=(T',\alpha',r'),\ttrip_{\min}\}$ as a forest triple with $m+2$ tree triples where $m\geq 0$. We order the tree triples so vertex $1\in V(T_{\min})$ and $\min(V(T_1))<\cdots<\min(V(T_m))<\min(V(T'))$.

Similar to the cycles, we can identify the tree of a tree triple based on a single vertex. If edge $(a,1)\in E(T_{\min})$, we write
\begin{equation*}
    \forest=\left\langle(v_1,\alpha^{(1)},r_1),\ldots,(v_{m},\alpha^{(m)},r_{m}),(v_{\min},\alpha^{(\min)},r_{\min})\right\rangle,
\end{equation*}
where $2\leq v_1<\cdots <v_m<v_{\min}\leq a$ and
\begin{gather*}
    E(T_i)=\{(v_i,v_i+1),\ldots,(v_i+|T_i|-2,v_i+|T_i|-1)\},\\E(T_{\min})=\{(v_{\min},v_{\min}+1),\ldots,(a-1,a),(a,1),(1,2),\ldots, (v_1-2,v_1-1)\}\cup E(U_k).
\end{gather*}
If instead edge $(a,1)\not\in E(T_{\min})$, we write
\begin{equation*}
    \forest=\left\langle(v_1,\alpha^{(1)},r_1),\ldots,(v_{m},\alpha^{(m)},r_{m}),(v',\alpha',r'),(v_{\min},\alpha^{(\min)},r)\right\rangle,
\end{equation*}
where $1=v_{\min}<v_1<\cdots<v_m<v'\leq a$ and
\begin{gather*}
    E(T_i)=\{(v_i,v_i+1),\ldots,(v_i+|T_i|-2,v_i+|T_i|-1)\},\\E(T')=\{(v',v'+1),\ldots,(a-1,a)\}\cup E(U_k),\\
    E(T_{\min})=\{(1,2),\ldots, (|T_{\min}|-1,|T_{\min}|)\}.
\end{gather*}
Note that if $(a,1)\in E(T_{\min})$, then for $\forest\in\FT'(C_a+U_k)$ we need $\alpha_l^{(\min)}\geq k$. If $(a,1)\not\in E(T_{\min})$, then for $\forest\in \FT'(C_a+U_k)$, we need $\alpha'_l\geq k$.

\begin{examplen}
    Figure~\ref{fig:CaTk-breakjoin} and Figure~\ref{fig:CaTk-shiftbreakjoin} provide examples of forest triples.
\end{examplen}
\begin{proof}[Proof of Lemma~\ref{lem:mutatedtadpole}]

A first-preserving involution always preserves $\alpha_1^{(\min)}$ and $r_{\min}$ for $\forest\in\FT'(C_a+U_k)$. We will define this involution in 5 steps.
\paragraph{Step 1:}
This step is nearly identical to Step 1 in the cycle involution. We define sets $A$ and $B$ with indexed subsets $A_i$ and $B_i$ for $i\in \mathbb{N}$ as
\begin{gather*}
    A_i=\left\{\forest:m \geq i\scolon \len(\alpha^{(1)})=\cdots=\len(\alpha^{(i)})=1\scolon r_1,\ldots,r_{i-1}\geq 2\scolon r_i=1\right\},\\
    B_i=\left\{\forest:m\geq i\scolon\len(\alpha^{(i)})\geq 2\scolon\len(\alpha^{(1)})=\cdots=\len(\alpha^{(i-1)})=1\scolon r_1,\ldots,r_{i-1}\geq 2\right\}.
\end{gather*}
Intuitively, if $\forest\in A_i$ or $\forest\in B_i$, then $i$ is the smallest integer such that either $\len(\alpha^{(i)})\geq 2$ or $r_i=1$. We define function $\tjoin_i:A_i\rightarrow B_i$, where $\tjoin_i(\forest)$ replaces $\ttrip_i$ and $\ttrip_{i+1}$ in $\forest$ with tree triples
\begin{equation*}
   S=\left(v_{i},\alpha^{(i+1)}\cdot\alpha^{(i)},r_{i+1}\right).
\end{equation*}
The inverse map $\tbreak_i:B_i\rightarrow A_i$ replaces $\ttrip_i$ in $\forest$ with
\begin{equation*}
    S_1=\left(v_i,(\alpha_l^{(i)}),1\right),S_2=\left(v_i+\alpha_l^{(i)},\alpha^{(i)}\setminus \alpha_l^{(i)}, r_i\right).
\end{equation*}

If $\forest\in A_i$, then tree triples $\ttrip_1$ through $\ttrip_{i-1}$ all have $\len(\alpha)=1$ and $r\geq 2$. Those tree triples are untouched in $\tjoin_i(\forest)$, while $\ttrip_i$ and $\ttrip_{i+1}$ are replaced by $S$. Since $S$ has a composition with length at least 2, then $\tjoin_i(\forest)\in B_i$. Applying $\tbreak_i(\tjoin_i(\forest))$ breaks $S$ while preserving the other tree triples, resulting in $\forest$ again. Thus, $\tjoin_i$ and $\tbreak_i$ are inverses, meaning $\tjoin_i$ is a bijection.
\begin{examplen}
    Figure~\ref{fig:CaTk-breakjoin} shows an example, where we either join the light blue and dark blue tree triples or break the blue tree triple.
\end{examplen}
\begin{figure}[!htb]
    \centering
    \caption{Example of $\tjoin$ and $\tbreak$ for $\forest\in \FT'(C_6+P_4)$}
    \label{fig:CaTk-breakjoin}
\begin{tikzpicture}
\tikzmath{\h0=(2.75-(1+2*cos(60))+3.5+3;}
\vertex[red]{1}[(\h0,0)]{({cos(60)},{sin(60)})}
\vertex[blue]{2}[(\h0,0)]{({1+cos(60)},{sin(60)})}
\vertex[blue]{3}[(\h0,0)]{({1+2*cos(60)},0)}
\vertex[blue]{4}[(\h0,0)]{({1+cos(60)},{-sin(60)})}
\vertex[blue]{5}[(\h0,0)]{({cos(60)},{-sin(60)})}
\vertex[red]{6}[(\h0,0)]{(0,0)}
\vertex[red]{7}[(\h0,0)]{(-1,0)}
\vertex[red]{8}[(\h0,0)]{(-2,0)}
\vertex[red]{9}[(\h0,0)]{(-3,0)}
\draw[red] (v1)--(v6)--(v7)--(v8)--(v9);
\draw[blue] (v2)--(v3)--(v4)--(v5);
\node[above right=0.2cm and 0.1cm of v3] {$\textcolor{blue}{S=\ttrip_1}$};
\node[below left=0.2cm and 0.5cm of v6] {$\textcolor{red}{\ttrip_{\min}}$};
\node[align=center,text width=5cm] at ({(-3+2*cos(60)+1)/2+\h0},2) {${\forest_B\in B_1,\quad\forest_B=\tjoin_1(\forest_A)=}$\\$\left\langle\textcolor{blue}{\Big(2,(1,2,\textcolor{lightblue}{\textbf{1}}),1\Big)},\textcolor{red}{\Big(6,(1,4),1\Big)}\right\rangle$};
\draw[<->, line width=0.1cm] (2.75,0) -- (3.5,0);
\tikzmath{\h1=0;}
\vertex[red]{1}[(\h1,0)]{({cos(60)},{sin(60)})}[']
\vertex[lightblue]{2}[(\h1,0)]{({1+cos(60)},{sin(60)})}[']
\vertex[blue]{3}[(\h1,0)]{({1+2*cos(60)},0)}[']
\vertex[blue]{4}[(\h1,0)]{({1+cos(60)},{-sin(60)})}[']
\vertex[blue]{5}[(\h1,0)]{({cos(60)},{-sin(60)})}[']
\vertex[red]{6}[(\h1,0)]{(0,0)}[']
\vertex[red]{7}[(\h1,0)]{(-1,0)}[']
\vertex[red]{8}[(\h1,0)]{(-2,0)}[']
\vertex[red]{9}[(\h1,0)]{(-3,0)}[']
\draw[red] (v'1)--(v'6)--(v'7)--(v'8)--(v'9);
\draw[blue] (v'3)--(v'4)--(v'5);
\node[text width=6cm, align=center] at ({(-3+2*cos(60)+1)/2+\h1-0.5},2) {${\forest_A\in A_1,\quad\forest_A=\tbreak_1(\forest_B)=}$\\$\left\langle\textcolor{lightblue}{\Big(2,(\textbf{1}),1\Big)},\textcolor{blue}{\Big(3,(1,2),1\Big)},\textcolor{red}{\Big(6,(1,4),1\Big)}\right\rangle$};
\node[above right=-0.3cm and 0.1cm of v'2] {$\textcolor{lightblue}{S_1=\ttrip_1}$};
\node[below right=0.2cm and 0.0cm of v'3] {$\textcolor{blue}{S_2=\ttrip_2}$};
\node[below left=0.2cm and 0.5cm of v'6] {$\textcolor{red}{\ttrip_{\min}}$};
\end{tikzpicture}
\end{figure}
\paragraph{Step 2:}
This time, we focus on forest triples where edge $(a,1)\in E(T_{\min})$, defining $C$ and $D$ as
\begin{gather*}
    C=\{\forest:
    (a,1)\in E(T_{\min})\scolon\len(\alpha^{(1)})=\cdots=\len(\alpha^{(m)})=1\scolon r_1,\ldots,r_{m-1}\geq 2\scolon r_m=1\scolon\\
    \text{either } \len(\alpha^{(\min)})\geq 2 \text{ or } |T_m|\geq k\},\\
    D=\{\forest: (a,1)\in E(T_{\min})\scolon\len(\alpha^{(1)})=\cdots=\len(\alpha^{(m)})=1\scolon r_1,\ldots,r_m\geq 2\scolon\\\len(\alpha^{(\min)})\geq 2\scolon\alpha_2^{(\min)}\leq a-v_{\min}\}.
\end{gather*}
The bijection from $C$ to $D$ will be similar to $\tbreak_i$ and $\tjoin_i$, but instead of attaching the single part of $\alpha^{(m)}$ at the end of $\alpha^{(\min)}$, we insert it in the second position. We define $\secondjoin:C\rightarrow D$, where $\secondjoin(\forest)$ replaces $\ttrip_{m}$ and $\ttrip_{\min}$ with
\begin{equation*}
    S=\left(v_m,(\alpha_1^{(\min)})\cdot \alpha^{(m)}\cdot (\alpha^{(\min)}\setminus \alpha_1^{(\min)}), r_m\right),
\end{equation*}
with inverse map $\secondbreak:D\rightarrow C$ that replaces $\ttrip_{\min}$ in $\forest$ with tree triples
\begin{equation*}
   S_1=\left(v_{\min},\alpha_2^{(\min)},1\right), S_2=\left(v_{\min}+\alpha_2^{(\min)},\alpha^{(\min)}\setminus\alpha_2^{(\min)},r_{\min}\right).
\end{equation*}
If $\forest$ has $\len(\alpha^{(\min)})=1$, then it must have $|T_m|\geq k$ in order for $\secondjoin(\forest)$ to have $\alpha^{(\min)}_l\geq k$. The last condition is necessary for $\secondjoin(\forest)\in\FT'(C_a+U_k)$. This explains one of the restrictions for $C$.

All $\forest\in C$ have $\alpha_1^{(m)}=|T_m|\leq a-v_m$. Inspecting where $\alpha_1^{(m)}$ ends up after joining, we see that in $\secondjoin(\forest)$, both $\len(\alpha^{(\min)})\geq 2$ and $\alpha_2^{(\min)}\leq a-v_{\min}$, which explains those restrictions in set $D$. 
\begin{examplen}
    Figure~\ref{fig:CaTk-secondbreakjoin} shows an example of $\secondjoin$ and $\secondbreak$. Note that if $\forest_C$ had $\alpha^{(\min)}=(6)$, then we would not be able to secondjoin.
\end{examplen}
\begin{figure}[!htb]
\centering
    \caption{Example of $\secondjoin$ and $\secondbreak$ for $\forest\in \FT'(C_6+P_4)$.}
    \label{fig:CaTk-secondbreakjoin}
\begin{tikzpicture}
\vertex[blue]{1}{({cos(60)},{sin(60)})}
\vertex[red]{2}{({1+cos(60)},{sin(60)})}
\vertex[red]{3}{({1+2*cos(60)},0)}
\vertex[lightblue]{4}{({1+cos(60)},{-sin(60)})}
\vertex[blue]{5}{({cos(60)},{-sin(60)})}
\vertex[blue]{6}{(0,0)}
\vertex[blue]{7}{(-1,0)}
\vertex[blue]{8}{(-2,0)}
\vertex[blue]{9}{(-3,0)}
\draw[red] (v2)--(v3);
\draw[blue] (v1)--(v6)--(v7)--(v8)--(v9) (v5)--(v6);
\node[above right=0.2cm and 0.1cm of v3] {$\textcolor{red}{\ttrip_1}$};
\node[below left=0.2cm and 0.5cm of v6] {$\textcolor{blue}{\ttrip_{\min}}$};
\node[below right=-0.3cm and 0.1cm of v4] {$\textcolor{lightblue}{\ttrip_{2}}$};
\node[align=center,text width=5cm] at ({(-3+2*cos(60)+1)/2},2) {${\forest_C\in C,\quad\forest_C=\secondbreak(\forest_D)=}$\\$\left\langle\textcolor{red}{\Big(2,(2),2\Big)},\textcolor{lightblue}{\Big(4,(\textbf{1}),1\Big)},\textcolor{blue}{\Big(5,(2,4),2\Big)}\right\rangle$};
\draw[<->, line width=0.1cm] (2.75,0) -- (3.5,0);
\tikzmath{\h1=(2.75-(1+2*cos(60))+3.5+3;}
\vertex[blue]{1}[(\h1,0)]{({cos(60)},{sin(60)})}[']
\vertex[red]{2}[(\h1,0)]{({1+cos(60)},{sin(60)})}[']
\vertex[red]{3}[(\h1,0)]{({1+2*cos(60)},0)}[']
\vertex[blue]{4}[(\h1,0)]{({1+cos(60)},{-sin(60)})}[']
\vertex[blue]{5}[(\h1,0)]{({cos(60)},{-sin(60)})}[']
\vertex[blue]{6}[(\h1,0)]{(0,0)}[']
\vertex[blue]{7}[(\h1,0)]{(-1,0)}[']
\vertex[blue]{8}[(\h1,0)]{(-2,0)}[']
\vertex[blue]{9}[(\h1,0)]{(-3,0)}[']
\draw[blue] (v'1)--(v'6)--(v'7)--(v'8)--(v'9) (v'4)--(v'5)--(v'6);
\draw[red] (v'2)--(v'3);
\node[text width=5cm, align=center] at ({(-3+2*cos(60)+1)/2+\h1-0.5},2) {${\forest_D\in D,\quad\forest_D=\secondjoin(\forest_C)=}$\\$\left\langle\textcolor{red}{\Big(2,(2),2\Big)},\textcolor{blue}{\Big(4,(2,\textcolor{lightblue}{\textbf{1}},4),2\Big)}\right\rangle$};
\node[above right=0.2cm and 0.1cm of v'3] {$\textcolor{red}{\ttrip_1}$};
\node[below left=0.2cm and 0.5cm of v'6] {$\textcolor{blue}{\ttrip_{\min}}$};
\end{tikzpicture}
\end{figure}
\paragraph{Step 3:}
We now focus on forest triples where edge $(a,1)\not\in E(T_{\min})$, with
\begin{gather*}
    E=\{\forest:(a,1)\not\in E(T_{\min})\scolon\len(\alpha^{(1)})=\cdots=\len(\alpha^{(m)})=1\scolon r_1,\ldots,r_{m-1}\geq 2\scolon r_m=1
    \},\\
    F=\{\forest:(a,1)\not\in E(T_{\min})\scolon\len(\alpha^{(1)})=\cdots=\len(\alpha^{(m)})=1\scolon r_1,\ldots,r_{m}\geq 2\scolon\len(\alpha^{(\min)})\geq 2\}.
\end{gather*}
We define bijection $\shiftjoin:E\rightarrow F$ where
\begin{multline*}
    \shiftjoin(\forest)=\Big\langle\!\left(v_1+\alpha_1^{(m)},\alpha^{(1)},r_1\right),\ldots,\left(v_{m-1}+\alpha_1^{(m)},\alpha^{(m-1)},r_{m-1}\right),\\
    \ttrip',\left(1, \alpha^{(\min)}\cdot \alpha^{(m)},r_{\min}\right)\!\Big\rangle,
\end{multline*}
with inverse $\shiftbreak:F\rightarrow E$ with
\begin{multline*}
    \shiftbreak(\forest)=\Big\langle\!\left(v_1-\alpha_l^{(\min)},\alpha^{(1)},r_1\right),\ldots,\left(v_{m}-\alpha_l^{(\min)},\alpha^{(m)},r_m\right),\\
    \left(v'-\alpha_l^{(\min)},(\alpha_l^{(\min)}),1\right),\ttrip',\left(1,\alpha^{(\min)}\setminus \alpha_l^{(\min)},r_{\min}\right)\!\Big\rangle.
\end{multline*}
Essentially, we are either joining $\ttrip_m$ to $\ttrip_{\min}$ or breaking $\ttrip_{\min}$, while ignoring $\ttrip'$.
\begin{examplen}
    Figure~\ref{fig:CaTk-shiftbreakjoin} shows an example of $\shiftjoin$ and $\shiftbreak$. Note that $\ttrip'$ (which is the red tree triple in the diagram) remains untouched.    
\end{examplen}
\begin{figure}[!htb]
\centering
    \caption{Example of $\shiftjoin$ and $\shiftbreak$ for $\forest\in\FT'(C_6+P_4)$.}
    \label{fig:CaTk-shiftbreakjoin}
\begin{tikzpicture}
\vertex[blue]{1}{({cos(60)},{sin(60)})}
\vertex[darkgreen]{2}{({1+cos(60)},{sin(60)})}
\vertex[darkgreen]{3}{({1+2*cos(60)},0)}
\vertex[lightblue]{4}{({1+cos(60)},{-sin(60)})}
\vertex[lightblue]{5}{({cos(60)},{-sin(60)})}
\vertex[red]{6}{(0,0)}
\vertex[red]{7}{(-1,0)}
\vertex[red]{8}{(-2,0)}
\vertex[red]{9}{(-3,0)}
\draw[red] (v6)--(v7)--(v8)--(v9);
\draw[darkgreen] (v2)--(v3);
\draw[lightblue] (v4)--(v5);
\node[align=center,text width=5cm] at ({(-3+2*cos(60)+1)/2},2) {${\forest_E\in E,\quad\forest_E=\shiftbreak(\forest_F)=}$\\$\left\langle\textcolor{darkgreen}{\Big(2,(2),2\Big)},\textcolor{lightblue}{\Big(4,(\textbf{2}),1\Big)},\textcolor{red}{\Big(6,(4),2\Big)},\textcolor{blue}{\Big(1,(1),1\Big)}\right\rangle$};
\node[above right=0.2cm and 0.1cm of v3] {$\textcolor{darkgreen}{\ttrip_1}$};
\node[below left=0.2cm and 0.5cm of v6] {$\textcolor{red}{\ttrip'}$};
\node[below right=-0.3cm and 0.1cm of v4] {$\textcolor{lightblue}{\ttrip_2}$};
\node[above left=-0.3cm and 0.2cm of v1] {$\textcolor{blue}{\ttrip_{\min}}$};
\draw[<->, line width=0.1cm] (3.25,0) -- (4,0);
\tikzmath{\h1=(2.75-(1+2*cos(60))+4.5+3;}
\vertex[blue]{1}[(\h1,0)]{({cos(60)},{sin(60)})}[']
\vertex[blue]{2}[(\h1,0)]{({1+cos(60)},{sin(60)})}[']
\vertex[blue]{3}[(\h1,0)]{({1+2*cos(60)},0)}[']
\vertex[darkgreen]{4}[(\h1,0)]{({1+cos(60)},{-sin(60)})}[']
\vertex[darkgreen]{5}[(\h1,0)]{({cos(60)},{-sin(60)})}[']
\vertex[red]{6}[(\h1,0)]{(0,0)}[']
\vertex[red]{7}[(\h1,0)]{(-1,0)}[']
\vertex[red]{8}[(\h1,0)]{(-2,0)}[']
\vertex[red]{9}[(\h1,0)]{(-3,0)}[']
\draw[red] (v'6)--(v'7)--(v'8)--(v'9);
\draw[darkgreen] (v'4)--(v'5);
\draw[blue] (v'1)--(v'2)--(v'3);
\node[text width=5cm, align=center] at ({(-3+2*cos(60)+1)/2+\h1-0.5},2) {${\forest_F\in F,\quad\forest_F=\shiftjoin(\forest_E)=}$\\$\left\langle\textcolor{darkgreen}{\Big(4,(2),2\Big)},\textcolor{red}{\Big(6,(4),2\Big)},\textcolor{blue}{\Big(1,(1,\textcolor{lightblue}{\textbf{2}}),1\Big)}\right\rangle$};
\node[above right=0.2cm and 0.1cm of v'3] {$\textcolor{blue}{\ttrip_{\min}}$};
\node[below left=0.2cm and 0.5cm of v'6] {$\textcolor{red}{\ttrip'}$};
\node[below right=-0.3cm and 0.1cm of v'4] {$\textcolor{darkgreen}{\ttrip_2}$};
\end{tikzpicture}
\end{figure}
\paragraph{Step 4:}
We define sets $G$ and $H$ with indexed subsets $G_i, H_i$ for $i\in\mathbb{Z}$ where
\begin{gather*}
    G_i=\{\forest:(a,1)\not\in E(T_{\min})\scolon\len(\alpha^{(1)})=\cdots=\len(\alpha^{(m)})=1\scolon r_1,\ldots,r_m\geq 2\scolon \\\len(\alpha^{(\min)})=1\scolon r'\leq |T'|+|T_{\min}|-k\scolon |T'|-k-r'+1=i\},\\
    H_i=\{\forest:(a,1)\in E(T_{\min})\scolon \len(\alpha^{(1)})=\cdots=\len(\alpha^{(m)})=1\scolon r_1,\ldots,r_{m}\geq 2\scolon \\\len(\alpha^{(\min)})\geq 2\scolon\alpha_2^{(\min)}\geq a-v_{\min}+1\scolon v_1-1-\alpha_1^{(\min)}=i\},
\end{gather*}
where for the final restriction on $H_i$, if $\forest=\{\ttrip_{\min}\}$ has $m=0$, then $v_1=v_{\min}$.
We define a bijection, $\rotatejoin_i:G_i\rightarrow H_i$, with
\begin{multline*}
    \rotatejoin_i(\forest)=\Big\langle\!\left(v_1+i,\alpha^{(1)},r_1\right),\ldots,\left(v_m+i,\alpha^{(m)},r_m\right),\\\left(a-r'+1,\alpha^{(\min)}\cdot \alpha',r_{\min}\right)\!\Big\rangle.
\end{multline*}
Say $\forest_G\in G_i$ and $\forest_H=\rotatejoin_i(\forest_G)$. Note that in $\forest_G$, we have $v_1=|T_{\min}|+1=\alpha_1^{(\min)}+1$. After rotating, $v_1$ in $\forest_G$ becomes $v_1+|T'|-k-r'+1$ in $\forest_H$. In other words,
\begin{align*}
    \text{$(v_1)$ in $\forest_H$}&=\text{$(v_1+|T'|-k-r'+1)$ in $\forest_G$, so}\\
    \text{$(v_1-1-\alpha_1^{(\min)})$ in $\forest_H$}&=\text{$(v_1+|T'|-k-r'+1-1-\alpha_1^{(\min)})$ in $\forest_G$}\\
    &=|T'|-k-r'+1=i,
\end{align*}
meaning $\forest_H\in H_i$. The inverse $\rotatebreak_i:H_i\rightarrow G_i$ is defined as
\begin{multline*}
    \rotatebreak_i(\forest)=\Big\langle\!\left(v_1-i,\alpha^{(1)},r_1\right),\ldots,\left(v_m-i,\alpha^{(m)},r_m\right),\\
    \left(v_{\min}-i,\alpha^{(\min)}\setminus \alpha_1^{(\min)},a-v_{\min}+1\right),\left(1,(\alpha_1^{(\min)}),r_{\min}\right)\!\Big\rangle.
\end{multline*}
\begin{examplen}
    Figure~\ref{fig:CaTk-rotatebj} shows examples of $\rotatejoin_i$ and $\rotatebreak_i$. Subfigure~\ref{subfig:CaTk-rotatebj-norm} and Subfigure~\ref{subfig:CaTk-rotatebj-norm2} show how the value of $r'$ in $\forest_G$ determines the ``rotation" of $\rotatejoin_i(\forest_G)$.

    Subfigure~\ref{subfig:CaTk-rotatebj-3} has $\forest_G$ where $r'=5=|T'|+|T_{\min}|-k$. If $r'=6$, then $\rotatejoin_i(\forest_G)$ would break edge $(1,6)$, which would either mess the order of tree triples or in this example, result in a forest that is not a non-broken circuit. If $r'=7$, then $\rotatejoin_i(\forest_G)$ would be the same graph as when $r'=1$. Thus, if $r'\geq |T'|+|T_{\min}|-k+1$, we cannot $\rotatejoin$.
\end{examplen}
\begin{figure}[!htb]
\caption{Examples of $\rotatejoin_i$ and $\rotatebreak_i$ for forest triples in $\FT'(C_6+P_4)$.}
    \label{fig:CaTk-rotatebj}
\begin{subfigure}{\textwidth}
\centering
    \caption{Simple example of rotate-joining where $r'=1$ in $F_G$.}
    \label{subfig:CaTk-rotatebj-norm}
\begin{tikzpicture}
\vertex[blue]{1}{({cos(60)},{sin(60)})}
\vertex[red]{2}{({1+cos(60)},{sin(60)})}
\vertex[red]{3}{({1+2*cos(60)},0)}
\vertex[lightblue]{4}{({1+cos(60)},{-sin(60)})}
\vertex[lightblue]{5}{({cos(60)},{-sin(60)})}
\vertex[lightblue]{6}{(0,0)}
\vertex[lightblue]{7}{(-1,0)}
\vertex[lightblue]{8}{(-2,0)}
\vertex[lightblue]{9}{(-3,0)}
\draw[lightblue] (v4)--(v5)--(v6)--(v7)--(v8)--(v9);
\draw[red] (v2)--(v3);
\node[above right=0.2cm and 0.1cm of v3] {$\textcolor{red}{\ttrip_1}$};
\node[below left=0.2cm and 0.5cm of v6] {$\textcolor{lightblue}{\ttrip'}$};
\node[above left=-0.3cm and 0.2cm of v1] {$\textcolor{blue}{\ttrip_{\min}}$};
\node[align=center,text width=5.5cm] at ({(-3+2*cos(60)+1)/2},2) {${\forest_G\in G_2,\quad\forest_G=\rotatebreak_2(\forest_H)=}$\\$\left\langle\textcolor{red}{\Big(2,(2),2\Big)},\textcolor{lightblue}{\Big(4,(\textbf{2,4}),1\Big)},\textcolor{blue}{\Big(1,(1),1\Big)}\right\rangle$};
\draw[<->, line width=0.1cm] (3.25,0) -- (4,0);
\tikzmath{\h1=(2.75-(1+2*cos(60))+4.5+3;}
\vertex[blue]{1}[(\h1,0)]{({cos(60)},{sin(60)})}[']
\vertex[blue]{2}[(\h1,0)]{({1+cos(60)},{sin(60)})}[']
\vertex[blue]{3}[(\h1,0)]{({1+2*cos(60)},0)}[']
\vertex[red]{4}[(\h1,0)]{({1+cos(60)},{-sin(60)})}[']
\vertex[red]{5}[(\h1,0)]{({cos(60)},{-sin(60)})}[']
\vertex[blue]{6}[(\h1,0)]{(0,0)}[']
\vertex[blue]{7}[(\h1,0)]{(-1,0)}[']
\vertex[blue]{8}[(\h1,0)]{(-2,0)}[']
\vertex[blue]{9}[(\h1,0)]{(-3,0)}[']
\draw[blue] (v'3)--(v'2)--(v'1)--(v'6)--(v'7)--(v'8)--(v'9);
\draw[red] (v'4)--(v'5);
\node[above left=-0.3cm and 0.1cm of v'5] {$\textcolor{red}{\ttrip_1}$};
\node[above left=-0.3cm and 0.2cm of v'1] {$\textcolor{blue}{\ttrip_{\min}}$};
\node[text width=5.5cm, align=center] at ({(-3+2*cos(60)+1)/2+\h1-0.5},2) {${\forest_H\in H_2,\quad\forest_H=\rotatejoin_2(\forest_G)=}$\\$\left\langle\textcolor{red}{\Big(4,(2),2\Big)},\textcolor{blue}{\Big(6,(1,\textcolor{lightblue}{\textbf{2,4}}),1\Big)}\right\rangle$};
\end{tikzpicture}
\end{subfigure}\vspace{2mm}
\begin{subfigure}{\textwidth}
\centering
   \caption{Same example as previous, but where $r'=2$ in $F_G$.}
    \label{subfig:CaTk-rotatebj-norm2}
\begin{tikzpicture}
\vertex[blue]{1}{({cos(60)},{sin(60)})}
\vertex[red]{2}{({1+cos(60)},{sin(60)})}
\vertex[red]{3}{({1+2*cos(60)},0)}
\vertex[lightblue]{4}{({1+cos(60)},{-sin(60)})}
\vertex[lightblue]{5}{({cos(60)},{-sin(60)})}
\vertex[lightblue]{6}{(0,0)}
\vertex[lightblue]{7}{(-1,0)}
\vertex[lightblue]{8}{(-2,0)}
\vertex[lightblue]{9}{(-3,0)}
\draw[lightblue] (v4)--(v5)--(v6)--(v7)--(v8)--(v9);
\draw[red] (v2)--(v3);
\node[above right=0.2cm and 0.1cm of v3] {$\textcolor{red}{\ttrip_1}$};
\node[below left=0.2cm and 0.5cm of v6] {$\textcolor{lightblue}{\ttrip'}$};
\node[above left=-0.3cm and 0.2cm of v1] {$\textcolor{blue}{\ttrip_{\min}}$};
\node[align=center,text width=5.5cm] at ({(-3+2*cos(60)+1)/2},2) {${\forest_G\in G_1,\quad\forest_G=\rotatebreak_1(\forest_H)=}$\\$\left\langle\textcolor{red}{\Big(2,(2),2\Big)},\textcolor{lightblue}{\Big(4,(\textbf{2,4}),2\Big)},\textcolor{blue}{\Big(1,(1),1\Big)}\right\rangle$};
\draw[<->, line width=0.1cm] (3.25,0) -- (4,0);
\tikzmath{\h1=(2.75-(1+2*cos(60))+4.5+3;}
\vertex[blue]{1}[(\h1,0)]{({cos(60)},{sin(60)})}[']
\vertex[blue]{2}[(\h1,0)]{({1+cos(60)},{sin(60)})}[']
\vertex[red]{3}[(\h1,0)]{({1+2*cos(60)},0)}[']
\vertex[red]{4}[(\h1,0)]{({1+cos(60)},{-sin(60)})}[']
\vertex[blue]{5}[(\h1,0)]{({cos(60)},{-sin(60)})}[']
\vertex[blue]{6}[(\h1,0)]{(0,0)}[']
\vertex[blue]{7}[(\h1,0)]{(-1,0)}[']
\vertex[blue]{8}[(\h1,0)]{(-2,0)}[']
\vertex[blue]{9}[(\h1,0)]{(-3,0)}[']
\draw[blue] (v'5)--(v'6) (v'2)--(v'1)--(v'6)--(v'7)--(v'8)--(v'9);
\draw[red] (v'3)--(v'4);
\node[below right=0.2cm and 0.0cm of v'3] {$\textcolor{red}{\ttrip_1}$};
\node[above left=-0.3cm and 0.2cm of v'1] {$\textcolor{blue}{\ttrip_{\min}}$};
\node[text width=6cm, align=center] at ({(-3+2*cos(60)+1)/2+\h1-0.5},2) {${\forest_H\in H_1,\quad\forest_H=\rotatejoin_1(\forest_G)=}$\\$\left\langle\textcolor{red}{\Big(3,(2),2\Big)},\textcolor{blue}{\Big(5,(1,\textcolor{lightblue}{\textbf{2,4}}),1\Big)}\right\rangle$};
\end{tikzpicture}
\end{subfigure}\vspace{2mm}
\begin{subfigure}{\textwidth}
\centering
    \caption{Rotate-joining where $r'=|T'|+|T_{\min}|-k$.}
    \label{subfig:CaTk-rotatebj-3}
\begin{tikzpicture}
\vertex[blue]{1}{({cos(60)},{sin(60)})}
\vertex[blue]{2}{({1+cos(60)},{sin(60)})}
\vertex[lightblue]{3}{({1+2*cos(60)},0)}
\vertex[lightblue]{4}{({1+cos(60)},{-sin(60)})}
\vertex[lightblue]{5}{({cos(60)},{-sin(60)})}
\vertex[lightblue]{6}{(0,0)}
\vertex[lightblue]{7}{(-1,0)}
\vertex[lightblue]{8}{(-2,0)}
\vertex[lightblue]{9}{(-3,0)}
\draw[lightblue] (v3)--(v4)--(v5)--(v6)--(v7)--(v8)--(v9);
\draw[blue] (v1)--(v2);
\node[below left=0.2cm and 0.5cm of v6] {$\textcolor{lightblue}{\ttrip'}$};
\node[above left=-0.3cm and 0.2cm of v1] {$\textcolor{blue}{\ttrip_{\min}}$};
\node[align=center,text width=6.5cm] at ({(-3+2*cos(60)+1)/2},2) {${\forest_G\in G_{-1},\quad\forest_G=\rotatebreak_{-1}(\forest_H)=}$\\$\left\langle\textcolor{lightblue}{\Big(3,(\textbf{7}),5\Big)},\textcolor{blue}{\Big(1,(2),2\Big)}\right\rangle$};
\draw[<->, line width=0.1cm] (3.25,0) -- (4,0);
\tikzmath{\h1=(2.75-(1+2*cos(60))+4.5+3;}
\vertex[blue]{1}[(\h1,0)]{({cos(60)},{sin(60)})}[']
\vertex[blue]{2}[(\h1,0)]{({1+cos(60)},{sin(60)})}[']
\vertex[blue]{3}[(\h1,0)]{({1+2*cos(60)},0)}[']
\vertex[blue]{4}[(\h1,0)]{({1+cos(60)},{-sin(60)})}[']
\vertex[blue]{5}[(\h1,0)]{({cos(60)},{-sin(60)})}[']
\vertex[blue]{6}[(\h1,0)]{(0,0)}[']
\vertex[blue]{7}[(\h1,0)]{(-1,0)}[']
\vertex[blue]{8}[(\h1,0)]{(-2,0)}[']
\vertex[blue]{9}[(\h1,0)]{(-3,0)}[']
\draw[blue] (v'2)--(v'3)--(v'4)--(v'5)--(v'6)--(v'1) (v'9)--(v'8)--(v'7)--(v'6);
\node[above left=-0.3cm and 0.2cm of v'1] {$\textcolor{blue}{\ttrip_{\min}}$};
\node[text width=6.5cm, align=center] at ({(-3+2*cos(60)+1)/2+\h1-0.5},2) {${\forest_H\in H_{-1},\quad\forest_H=\rotatejoin_{-1}(\forest_G)=}$\\$\left\langle\textcolor{blue}{\Big(2,(2,\textcolor{lightblue}{\textbf{7}}),2\Big)}\right\rangle$};
\end{tikzpicture}
\end{subfigure}
\end{figure}
\paragraph{Step 5:}
The remaining forest triples are in sets $I_1$ and $I_2$, where
\begin{gather*}
    I_1=\{\forest:(a,1)\in E(T)\scolon\len(\alpha^{(1)})=\cdots=\len(\alpha^{(m)})=\len(\alpha^{(\min)})=1\scolon r_1,\ldots,r_{m-1}\geq 2\scolon\\
    \text{either $r_m\geq 2$ or $|T_m|\leq k-1$}\},\\
    I_2=\{\forest:(a,1)\not\in E(T)\scolon \len(\alpha^{(1)})=\cdots=\len(\alpha^{(m)})=\len(\alpha^{(\min)})=1\scolon r_1,\ldots,r_m\geq2\scolon\\
    r'\geq |T'|+|T_{\min}|-k+1\}.
\end{gather*}
First, note that if $\forest\in\FT'(C_a+U_k)$ does not have edge $(a,1)$, then it must have $\alpha_l'\geq k$. If $\len(\alpha')\geq 2$, then \begin{equation*}r'\leq \alpha_1'\leq |T'|-\alpha_l'\leq |T'|-k\leq |T'|+|T|-k,\end{equation*}
meaning $\forest\not \in I_2$. Thus, $I_1$ and $I_2$ contain only unit forest triples.

Combining the bijections on sets $A$ through $H$ as well as the identity function on $I_1$ and $I_2$ lets us construct a first-preserving involution on $\FT'(C_a+U_k)$, where the set of fixed points is $I_1\cup I_2$.

\paragraph{Computing Sum of Fixed Points:}
The last step is to find an expression for the sum of fixed points, by counting the forest triples in $I_1$ and $I_2$. We recall
\begin{equation*}
B_{a,k}^{(i)}(\boldsymbol{x})=\sum_{\forest\in\FT'^{(i)}(C_a+U_k)}\sign(\forest)\cdot e_{\type'(\forest)}=\sum_{\substack{\forest\in I_1\cup I_2\\\alpha_1^{(\min)}=i, r_{\min}=1}}\sign(\forest)\cdot e_{\type'(\forest)}.
\end{equation*}
For $\forest\in I_1$ (where edge $(a,1)\in E(T)$), we define $\comp'(\forest)=\alpha^{(m)}\cdots \alpha^{(1)}$. For $\forest\in I_2$ (where edge $(a,1)\not\in E(T)$), we define $\comp'(\forest)=\alpha'\cdot \alpha^{(m)}\cdots \alpha^{(1)}$. Note that $\type'(\forest)=\sort(\comp'(\forest))$, and $\comp'(\forest)\models a+k-\alpha_1^{(\min)}-1$.
\paragraph{Case 1:}
We will first find $B_{a,k}^{(i)}(\boldsymbol{x})$ where $i\leq k-1$. Let $\beta\models a+k-\alpha_1^{(\min)}-1$ be some composition, we want to count the number of forest triples where $\comp'(\forest)\models \beta$.

For $\forest\in I_1$, note that $\alpha_1^{(\min)}=\alpha_l^{(\min)}\geq k$ for $\forest\in \FT'(C_a+U_k)$, so $\alpha_1^{(\min)}\neq i$, and we have zero forest triples.

For $\forest\in I_2$, since edge $(a,1)\not \in E(T_{\min})$, there is only one way to place the tree triples so that $\comp'(\forest)=\beta$. Note that $\alpha'_1=\beta_1\geq k$. There are $|T_m|-1=\beta_2-1$ possible values for $r_m$, and $|T_{m-1}|-1=\beta_3-1$ possible values for $r_{m-1}$, and we use similar logic up until $r_1$. There are $k-|T_{\min}|=k-i$ possible values for $r'$. Thus, there are $(k-i)\cdot (\beta_2-1)\cdots (\beta_l-1)$ forest triples in $I_2$ where $\comp'(\forest)=\beta$. This means
\begin{equation*}
    B_{a,k}^{(i)}(\boldsymbol{x})=\displaystyle{\sum_{\substack{\beta\models a+k-i-1\\\beta_1\geq k}}}(k-i)\cdot (\beta_2-1)\cdots (\beta_l-1)\cdot e_{\sort(\beta)},\hspace{1cm}\text{\normalfont if }i\leq k-1.
\end{equation*}
\paragraph{Case 2:}
We will now find $B_{a,k}^{(i)}(\boldsymbol{x})$ where $i\geq k$. Let $\beta\models a+k-\alpha_1^{(\min)}-1$, we proceed like before.

First, for $\forest\in I_1$, there are $|T_{\min}|-k=i-k$ ways to place the first tree triple, and then one way to place the remaining tree triples. For $r_{m-1}$, there are $|T_{m-1}|-1=\beta_2-1$ possible values. For $r_{m-2}$, there are $\beta_3-1$ possible values, and this is true till $r_1$. If $|T_m|=\beta_1\leq k-1$, then there are $\beta_1$ possible values for $r_m$ (since $r_m$ can equal 1), otherwise there are $\beta_1-1$ possible values (since $r_m\neq 1$).

If $\forest\in I_2$, note that $r'\geq |T'|+|T_{\min}|-k+1$, but since this case $|T_{\min}|=i\geq k$, we get $r'\geq |T'|+1$. However, $\forest$ must have $r'\leq |T'|$, so there are no forest triples to count in this case. Thus, summing up the forest triples in $I_1$ gets
\begin{align*}
\begin{split}
    B_{a,k}^{(i)}(\boldsymbol{x})=&\sum_{\beta\models a+k-i-1}(i-k)\cdot (\beta_1-1)\cdots (\beta_l-1)\cdot e_{\sort(\beta)}\\+&\sum_{\substack{\beta\models a+k-i-1\\\beta_1\leq k-1}}(i-k)\cdot (\beta_2-1)\cdots (\beta_l-1)\cdot e_{\sort(\beta)}
\end{split}\\
    =&\sum_{\substack{\beta\models a+k-i\\\beta_1\leq k}}(i-k)\cdot (\beta_2-1)\cdots (\beta_l-1)\cdot e_{\sort(\beta_1-1,\beta\setminus\beta_1)},\hspace{1cm}\text{\normalfont if }i\geq k.
\end{align*}
The final step involves combining the two sums into a single sum over compositions $\beta\models a+k-i$, where if $\beta_1=1$, the sum gives the value of the top summation, while if $2\leq \beta_1\leq k$, the sum gives the value of the second summation. Note that if $i=k$, the sum becomes 0.

Thus, we have derived Equation~\ref{eqn:specBak}, and using Equation~\ref{eqn:nicebftobf} and some composition manipulation, we can derive Equation~\ref{eqn:bak}.
\end{proof}

\section{Further Directions}
\paragraph{Non-Adjacent Cycle Chains:}
\begin{figure}[b]
\centering
\caption{An $e-positive$ non-adjacent cycle graph.}
\label{fig:posnonadjacentcycle}
\begin{tikzpicture}
\tikzmath{\o1=0.5/sin(36);}
\vertex{1}{({\o1*cos(144)-1},{\o1*sin(144)})};
\vertex{5}{(0:\o1)};
\vertex[black][above]{6}{(72:\o1)};
\vertex{2}{(144:\o1)};
\vertex[black][left]{3}{(216:\o1)};
\vertex{4}{(288:\o1)};
\vertex{7}{(\o1+1,0)};
\vertex{8}{(\o1+2,0)};
\draw (v1)--(v2) (v2)--(v3)--(v4)--(v5)--(v6)--(v2) (v8)--(v7)--(v5);
\end{tikzpicture}
\end{figure}
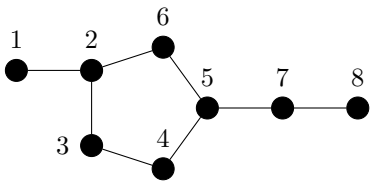
This paper proved that cycle chains connected at adjacent vertices are $e$-positive. However, this property does not hold in general for cycles connected at non-adjacent vertices. For example, in Figure~\ref{fig:fakecyclechains}, the cycle chain on the left (which contains cycles connected at adjacent vertices) is $e$-positive, while the graph on the right containing cycles connected at non-adjacent vertices is not (with negative coefficient $-8e_{3,2,2}$).

All non-adjacent cycle chains with 6 and 7 vertices are not $e$-positive, but there are some non-adjacent cycle chains that are $e$-positive, with the smallest example being the graph in Figure~\ref{fig:posnonadjacentcycle}. One possible direction is finding out the exact conditions for when cycle chains are $e$-positive.

\paragraph{Attaching at Cut Vertices:}
We can generalize the method used for adjacent cycle chains to both non-adjacent cycle chains and more generally graphs attached at cut vertices. Say $G'$ is an arbitrary graph and $A$ is a graph with $a$ vertices. Then, we define $B_{A,k}(\boldsymbol{x})$ similarly to how we defined it in Section~\ref{sec:cycchain}, but instead looking at subsets of $\FT(A+U_k)$. The chromatic symmetric function of $A+G'$ can be written similar to Equation~\ref{eqn:csfC_a+G'} as
\begin{equation*}
    X_{A+G'}(\boldsymbol{x})=\sum_{k=1}^{|G'|}X^{(k)}_{G'}(\boldsymbol{x})\cdot B_{A,k}(\boldsymbol{x}),
\end{equation*}
using the results from Section~\ref{sec:cycchain}. Note that $G'$ may not have a first-preserving involution, in which case the value of $X^{(k)}_{G'}$ would be negative for some integer $k$.

If $G'$ does have a first-preserving involution and $B_{A,k}(\boldsymbol{x})$ is non-negative for all integers $k$, then the graph $A+G'$ is $e$-positive. Moreover, if $B_{A,k}^{(i)}$ is always $e$-positive for integer pairs $k,i$, then $A+G'$ also has a first-preserving involution. This paper proved this true for $A=C_a$, shown in Section~\ref{sec:cpti}, and a similar method could be used to prove that chains formed by connecting other graphs at single vertices are $e$-positive.
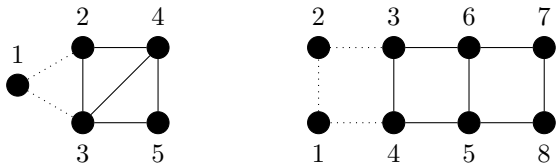
\begin{figure}[!htb]
\centering
\caption{Attaching a vertex or a cycle to a graph at two vertices.}
\label{fig:attachat2}
\begin{tikzpicture}
\tikzmath{\o1=-4;}
\vertex{1}[(\o1,0)]{(0,0)}['];
\vertex{2}[(\o1,0)]{({sqrt(3)/2},0.5)}[']
\vertex[black][below]{3}[(\o1,0)]{({sqrt(3)/2},-0.5)}[']
\vertex[black][below]{5}[(\o1,0)]{({1+sqrt(3)/2},-0.5)}[']
\vertex{4}[(\o1,0)]{({1+sqrt(3)/2},0.5)}[']
\draw[dotted] (v'2)--(v'1)--(v'3);
\draw (v'2)--(v'3)--(v'5)--(v'4)--(v'2) (v'3)--(v'4);
\vertex[black][below]{1}{(0,-0.5)};
\vertex{2}{(0,0.5)};
\vertex{3}{(1,0.5)};
\vertex[black][below]{4}{(1,-0.5)};
\vertex[black][below]{5}{(2,-0.5)};
\vertex{6}{(2,0.5)};
\vertex{7}{(3,0.5)};
\vertex[black][below]{8}{(3,-0.5)};
\draw[dotted] (v4)--(v1)--(v2)--(v3);
\draw (v3)--(v4)--(v5)--(v6)--(v7)--(v8)--(v5) (v3)--(v6);
\end{tikzpicture}
\end{figure}
\paragraph{Attaching at Multiple Vertices:}
Another further direction could be attaching a vertex or a cycle at multiple edges. The graph on the left of Figure~\ref{fig:attachat2} shows how to attach a vertex to the triangle ladder with 4 vertices, using dotted edges. The right graph of Figure~\ref{fig:attachat2} shows attaching $C_4$ to a chain of cycles connected at two edges. Both graphs are $e$-positive; finding when it is possible to attach a vertex or cycle to two edges of a graph with a first-preserving involution is another further direction to explore. 


\section{Acknowledgements}

We would like to thank the USA--PRIMES program for providing the opportunity to work on this topic.
\bibliographystyle{hplain} 
\bibliography{main} 
\end{document}